\colorlet{MyRed}{Crimson!75!black}
\colorlet{MyGreen}{DarkGreen!80!black}
\colorlet{MyBlue}{MediumBlue!80!black}
\pgfplotsset{compat=newest}
\def\cleartheorem#1{\expandafter\let\csname#1\endcsname\relax
    \expandafter\let\csname c@#1\endcsname\relax
}
\newtheorem{theorem}{Theorem}[section]
\newtheorem{lemma}[theorem]{Lemma} 
\newtheorem{proposition}[theorem]{Proposition}
\newtheorem{remark}[theorem]{Remark}
\theoremstyle{definition}
\newtheorem{assumption}{Assumption}		
\newcounter{proofpart}
\numberwithin{example}{section}		
\crefname{algo}{Algorithm}{Algorithms}
\crefname{assumption}{Assumption}{Assumptions}
\crefname{lemma}{Lemma}{Lemmas}
\newcommand{\debug}[1]{#1}		
\newcommand{\newmacro}[2]{\newcommand{#1}{{\debug{#2}}}}		
\newcommand{\newop}[2]{\DeclareMathOperator{#1}{\debug{#2}}}		
\DeclarePairedDelimiter{\bracks}{[}{]}		
\DeclarePairedDelimiterX{\setdef}[2]{\{}{\}}{#1:#2}		
\DeclarePairedDelimiterXPP{\exclude}[1]{\mathopen{}\setminus}{\{}{\}}{}{#1}
\newcommand{\N}{\mathbb{N}}		
\newcommand{\R}{\mathbb{R}}		
\DeclarePairedDelimiterXPP{\bigof}[1]{\mathcal{O}}{(}{)}{}{#1}		
\DeclareMathOperator{\dist}{dist}		
\DeclareMathOperator{\one}{\mathds{1}}		
\DeclareMathOperator{\relint}{ri}		
\newcommand{\eg}{e.g.,\xspace}		
\newcommand{\ie}{i.e.,\xspace}		
\newcommand{\wrt}{w.r.t.\xspace}		
\newcommand{\alt}[1]{#1'}		
\newcommand{\altalt}[1]{#1''}		
\newmacro{\dd}{\mathrm{d}}		
\newcommand{\eps}{\varepsilon}		
\newmacro{\const}{c}		
\newmacro{\constalt}{c'}	
\newmacro{\constaltalt}{c''}	
\newmacro{\constaltaltalt}{c'''}	
\newmacro{\Const}{C}		
\NewDocumentCommand{\coef}{O{\lambda}}{\debug{#1}}
\newmacro{\beforestart}{0}		
\newmacro{\start}{1}		
\newmacro{\afterstart}{2}		
\newmacro{\running}{\start,\afterstart,\dotsc}		
\newmacro{\halfrunning}{1,3/2,2\dotsc}		
\newmacro{\run}{k}		
\newmacro{\runalt}{\ell}		
\newmacro{\runaltalt}{s}		
\newmacro{\nRuns}{K}		
\newmacro{\runs}{\mathcal{\nRuns}}		
\newmacro{\stepoffset}{v}
\newmacro{\state}{x}		
\newmacro{\statealt}{y}		
\newmacro{\statealtalt}{z}		
\newcommand{\beforeinit}[1][\state]{\debug{#1}_{\beforestart}}		
\newcommand{\curr}[1][\state]{\debug{#1}_{\run}}		
\renewcommand{\next}[1][\state]{\debug{#1}_{\run+1}}		
\newcommand{\seqinf}[3][\N]{(#2_{#3})_{#3\in#1}}
\newcommand{\seqinfin}[3][\N]{(#2)_{#3\in#1}}
\newmacro{\seqitem}{a}
\newmacro{\param}{\theta}		
\newmacro{\paramalt}{\alt\param}
\newmacro{\params}{\Theta}		
\newmacro{\iterate}{x}		
\newmacro{\iteratealt}{y}		
\newmacro{\iteratealtalt}{z}		
\newcommand{\beforeinitP}[1][\iterate]{\debug{#1}_{\beforestart}(\param)}		
\newcommand{\currP}[1][\iterate]{\debug{#1}_{\run}(\param)}		
\newcommand{\nextP}[1][\iterate]{\debug{#1}_{\run+1}(\param)}		
\newmacro{\optP}{\point^\star(\param)}
\newmacro{\derpoint}{\nabla_{\point}}
\newmacro{\derparam}{\partial_{\param}}
\newmacro{\dderpointparam}{\nabla^2_{\point\param}}
\newmacro{\dderpoint}{\nabla^2_{\point\point}}
\newmacro{\tstart}{0}		
\newmacro{\timealt}{s}		
\newmacro{\horizon}{T}		
\newmacro{\traj}{x}		
\newmacro{\trajalt}{y}		
\newmacro{\trajaltalt}{z}		
\newmacro{\flowmap}{\Theta}		
\DeclarePairedDelimiterXPP{\flowof}[2]{\flowmap_{#1}}{(}{)}{}{#2}		
\newop{\Nash}{NE}		
\newop{\CE}{CE}		
\newop{\CCE}{CCE}		
\newop{\NI}{NI}		
\newop{\brep}{br}		
\newop{\preg}{\overline{Reg}}		
\newop{\val}{val}		
\newmacro{\play}{i}		
\newmacro{\playalt}{j}		
\newmacro{\playaltlalt}{k}		
\newmacro{\nPlayers}{N}		
\newmacro{\players}{\mathcal{\nPlayers}}		
\newmacro{\pure}{\alpha}		
\newmacro{\purealt}{\beta}		
\newmacro{\purealtalt}{\gamma}		
\newmacro{\nPures}{A}		
\newmacro{\pures}{\mathcal{\nPures}}		
\newmacro{\loss}{\ell}		
\newmacro{\pay}{u}		
\newmacro{\payv}{v}		
\newmacro{\pot}{f}		
\newmacro{\game}{\mathcal{G}}		
\newmacro{\gamefull}{\game(\players,\points,\pay)}		
\newmacro{\fingame}{\Gamma}		
\newmacro{\fingamefull}{\Gamma(\players,\pures,\pay)}		
\newmacro{\gmat}{g}		
\newmacro{\gdist}{\dist_{\gmat}}
\newmacro{\mfld}{M}		
\newmacro{\form}{\omega}		
\newmacro{\curve}{\gamma}
\newmacro{\tvec}{z}		
\newmacro{\uvec}{u}		
\newmacro{\ball}{\overline{\mathbb{B}}}		
\newmacro{\openball}{\mathbb{B}}		
\newmacro{\sphere}{\mathbb{S}}		
\newmacro{\normalexp}{E}
\newmacro{\graph}{\mathcal{G}}
\newmacro{\vertices}{\mathcal{V}}
\newmacro{\edges}{\mathcal{E}}
\newmacro{\mat}{M}		
\newmacro{\hmat}{H}		
\newmacro{\ones}{\mathbf{1}}		
\newmacro{\eye}{I}		
\newmacro{\zer}{\mathbf{0}}		
\DeclarePairedDelimiter{\norm}{\lVert}{\rVert}		
\DeclarePairedDelimiterXPP{\dnorm}[1]{}{\lVert}{\rVert}{_{\ast}}{#1}		
\DeclarePairedDelimiterXPP{\sqnorm}[1]{}{\lVert}{\rVert}{^2}{#1}		
\DeclarePairedDelimiterXPP{\opnorm}[1]{}{\lVert}{\rVert}{_\mathrm{op}}{#1}
\DeclarePairedDelimiterXPP{\onenorm}[1]{}{\lVert}{\rVert}{_{1}}{#1}		
\DeclarePairedDelimiterXPP{\twonorm}[1]{}{\lVert}{\rVert}{_{2}}{#1}		
\DeclarePairedDelimiterXPP{\supnorm}[1]{}{\lVert}{\rVert}{_{\infty}}{#1}		
\DeclarePairedDelimiterX{\braket}[2]{\langle}{\rangle}{#1,#2}		
\DeclarePairedDelimiterX{\inner}[2]{\langle}{\rangle}{#1,#2}
\newmacro{\vecspace}{\mathcal{V}}		
\newmacro{\subspace}{\mathcal{W}}		
\newmacro{\coord}{i}		
\newmacro{\coordalt}{j}		
\newmacro{\coordaltalt}{k}		
\newmacro{\nCoords}{d}		
\newmacro{\dims}{\nCoords}		
\newmacro{\dimsalt}{m}		
\newmacro{\vdim}{\nCoords}		
\newmacro{\pvec}{v}		
\newmacro{\bvec}{e}		
\newmacro{\bvecs}{\mathcal{E}}		
\newmacro{\pspace}{\mathcal{X}}		
\newmacro{\dspace}{\pspace^{\ast}}		
\newmacro{\dvec}{u}		
\newmacro{\dbvec}{\eps}		
\newmacro{\dpoint}{y}		
\newmacro{\dpointalt}{\alt\dpoint}		
\newmacro{\dpointaltalt}{\altalt\dpoint}		
\newmacro{\dpoints}{\mathcal{Y}}		
\newmacro{\dstate}{Y}		
\newmacro{\dbase}{w}		
\newmacro{\dualvar}{\lambda}
\newmacro{\dualvaralt}{\mu}
\newmacro{\dualfunc}{\phi}
\newmacro{\dualfuncalt}{\psi_\radius}
\newmacro{\empdualfunc}{\phi_\nsamples}
\newmacro{\lbdualvar}{\lb{\dualvar}}
\newmacro{\ubdualvar}{\ub{\dualvar}}
\newmacro{\basedualvar}{\dualvar^*_0}
\newcommand{\defeq}{\coloneqq}		
\newcommand{\from}{\colon}		
\newop{\Opt}{Opt}		
\newop{\Sol}{Sol}		
\newop{\gap}{Gap}		
\newop{\primal}{(P)}
\newop{\dual}{(Q)}
\newmacro{\optspace}{\mathcal{X}}
\newmacro{\Rd}{\R^\dims}
\newmacro{\tfun}{g}		
\newmacro{\obj}{F}		
\newmacro{\objalt}{G}		
\newmacro{\objaltalt}{H}		
\newmacro{\sobj}{f}		
\newmacro{\sgrad}{\nabla_{\point}\sobj}		
\newmacro{\sobjalt}{g}		
\newmacro{\sgradalt}{\nabla_{\point}\sobjalt}		
\newmacro{\gvec}{g}		
\newmacro{\oper}{A}		
\newmacro{\vecfield}{v}		
\newcommand{\sol}[1][\point]{{#1}^{\star}}		
\newmacro{\vecsol}{\vecfield(\sol)}		
\newmacro{\signal}{V}		
\newmacro{\step}{\eta}		
\newmacro{\learn}{\gamma}		
\newmacro{\opt}{\point^\star}		
\newmacro{\vbound}{G}		
\newmacro{\lips}{Lip}		
\newmacro{\lipcst}{L}
\newmacro{\bdcst}{M}
\newmacro{\bdcstalt}{\widetilde{F}}
\newmacro{\fbound}{\bdcstalt(0)}		
\newmacro{\varbdcst}{K}
\newmacro{\lbcst}{a}
\newmacro{\ubcst}{b}
\newmacro{\strong}{\mu}		
\newmacro{\strongOpt}{\mu^*}		
\newmacro{\strongMap}{\nu}
\newmacro{\smooth}{L}
\newmacro{\hsmooth}{L_3}
\newmacro{\plainhsmooth}{L}
\newmacro{\tmplipcst}{L}
\newmacro{\tmpbdcst}{B}
\newop{\cone}{cone}
\newop{\tspace}{T}		
\newop{\nspace}{N}		
\newop{\tcone}{TC}		
\newop{\dcone}{DC}		
\newop{\ncone}{NC}		
\newop{\regncone}{\widehat{NC}}		
\newop{\pcone}{PC}		
\newop{\hull}{\Delta}		
\newmacro{\cvx}{\mathcal{C}}		
\newmacro{\subd}{\partial}		
\newmacro{\minmax}{\mathcal{L}}		
\newmacro{\minvar}{\point_{1}}		
\newmacro{\minvaralt}{\alt\minvar}		
\newmacro{\minvars}{\points_{1}}		
\newmacro{\minsol}{\sol[\minvar]}		
\newmacro{\maxvar}{\point_{2}}		
\newmacro{\maxvaaltr}{\alt\maxvar}		
\newmacro{\maxvars}{\points_{2}}		
\newmacro{\maxsol}{\sol[\maxvar]}		
\newop{\Eucl}{\Pi}		
\newop{\logit}{\Lambda}		
\newop{\dkl}{KL}		
\newmacro{\hreg}{h}		
\newmacro{\hconj}{\hreg^{\ast}}		
\newmacro{\breg}{D}		
\newmacro{\mprox}{P}		
\newmacro{\mirror}{Q}		
\newmacro{\fench}{F}		
\newmacro{\hstr}{K}		
\newmacro{\depth}{H}		
\newmacro{\proxdom}{\points_{\hreg}}		
\newmacro{\zone}{\mathbb{D}}		
\newmacro{\bregkernel}{\theta} 
\DeclarePairedDelimiterXPP{\proxof}[2]{\mprox_{#1}}{(}{)}{}{#2}		
\newmacro{\bregexp}{\alpha}
\newmacro{\bregcst}{M}
\newmacro{\kernelcst}{C}
\newmacro{\kernelexp}{q}
\newop{\orcl}{\mathsf{g}}	
\newop{\noise}{\mathsf{z}}	
\newmacro{\noisedev}{\sigma}
\newmacro{\sdevcontrol}{\kappa}
\newmacro{\varcontrol}{\sdevcontrol^2}
\newmacro{\point}{x}		
\newmacro{\pointalt}{\alt\point}		
\newmacro{\pointaltalt}{\altalt\point}		
\newmacro{\points}{\mathcal{K}}		
\newmacro{\intpoints}{\relint\points}		
\newmacro{\basealt}{q}		
\newmacro{\basealtalt}{u}		
\newmacro{\open}{\mathcal{U}}		
\newmacro{\closed}{\mathcal{C}}		
\newmacro{\cpt}{\mathcal{K}}		
\newmacro{\nbd}{\mathcal{U}}		
\newmacro{\nhd}{\nbd}		
\newmacro{\nbdalt}{\mathcal{V}}		
\newmacro{\nbdaltalt}{\mathcal{W}}		
\newmacro{\mset}{A}
\NewDocumentCommand{\ex}{m o}{\mathbb{E}\IfValueT{#2}{_{#2}}\IfValueT{#1}{\bracks*{#1}}}		
\newop{\prob}{P}
\newop{\empirical}{\WAedit{\widehat{\prob}_{\nsamples}}}
\newop{\emp}{\empirical}
\newmacro{\empex}{\frac{1}{\nsamples}\sum_{\ind=1}^\nsamples}
\newop{\probalt}{Q}
\newop{\coupling}{\pi}
\newop{\couplingalt}{\alt\pi}
\NewDocumentCommand{\couplings}{e{_} O{\samples}}{\debug{\mathcal{P}}\IfValueT{#1}{_{#1}}(#2\times#2)}
\newop{\Var}{Var}		
\newop{\simplex}{\hull}		
\newop{\rad}{s}
\NewDocumentCommand{\condex}{m o}{\mathbb{E}\bracks*{#1 \IfValueT{#2}{ | #2}}}		
\DeclarePairedDelimiterXPP{\probof}[2]{#1}{(}{)}{}{
 #2}
\DeclarePairedDelimiterXPP{\oneof}[1]{\one}{\{}{\}}{}{
 #1}
\newmacro{\sample}{\xi}		
\newmacro{\samplealt}{\altsample} 
\newmacro{\altsample}{\zeta}
\newmacro{\samplealtalt}{\alt\altsample}
\newmacro{\samples}{\Xi}		
\newmacro{\optsample}{\xi^*}
\newmacro{\optoptsample}{\xi^{**}}
\newmacro{\nsamples}{n}
\newmacro{\Nsamples}{N}
\newmacro{\nsamplesalt}{m}
\newmacro{\target}{y}
\newmacro{\Point}{X}
\newmacro{\Target}{Y}
\newmacro{\Pointalt}{\alt\Point}
\newmacro{\Targetalt}{\alt\Target}
\newmacro{\targetalt}{\alt\target}
\newmacro{\targetcost}{\kappa}
\newmacro{\distance}{d}
\newmacro{\distfunc}{D}
\newmacro{\Hdist}{D_H}
\newmacro{\cost}{\distance^2} 
\newmacro{\Cost}{C}
\newmacro{\mincost}{\sol[c]}
\newmacro{\maxcost}{\sol[C]}
\NewDocumentCommand{\wass}{s e{^} O{} o m}{\debug{W}\IfValueT{#2}{^{{#2}}}\IfValueT{#4}{^{#4}}_{#3}\IfBooleanT{#1}{\left}(#5\IfBooleanT{#1}{\right})}
\newmacro{\filter}{\mathcal{F}}		
\newmacro{\probspace}{(\samples,\filter,\prob)}		
\newmacro{\radius}{\rho}
\newmacro{\radiusalt}{\alt\radius}
\newmacro{\minradius}{{\overline{\rho}^2_\nsamples}}
\newmacro{\cstminradius}{{\rho_{\mathrm{thres}}}}
\newmacro{\GeoRad}{R}
\newmacro{\OptGeoRad}{R^*}
\newmacro{\georad}{r}
\newmacro{\ballradius}{r}
\newmacro{\ballradiusalt}{r'}
\newmacro{\Margin}{\Delta}
\newmacro{\event}{E}       
\newmacro{\eventalt}{H}       
\newmacro{\mean}{\mu}		
\newmacro{\sdev}{\sigma}		
\newmacro{\variance}{\sdev^{2}}		
\newmacro{\level}{\alpha}
\newmacro{\levelalt}{p}
\NewDocumentCommand{\Lspace}{E{^}{{1}} O{\prob}}{\debug{L^{#1}(#2)}}
\NewDocumentCommand{\Obj}{e{^} E{_}{\radius} D(){\obj, \prob}}{
    {F}\IfValueT{#1}{^{#1}}\IfValueT{#2}{_{#2}}(#3)
}
\NewDocumentCommand{\Objalt}{e{^} E{_}{\radius} D(){\obj}}{
    {G}\IfValueT{#1}{^{#1}}\IfValueT{#2}{_{#2}}(#3)
}
\newmacro{\ind}{i}
\newmacro{\indalt}{j}
\newmacro{\reg}{\varepsilon}
\newmacro{\regalt}{\tau}
\newmacro{\basereg}{\base[\reg]}
\newmacro{\basesdev}{\base[\sdev]}
\newmacro{\baseregalt}{\base[\reg]'}
\newmacro{\basesdevalt}{\base[\sdev]'}
\newmacro{\regparam}{\varepsilon}
\newmacro{\regparamalt}{\delta}
\newmacro{\regparamaltalt}{\tau}
\newmacro{\Regparam}{\Delta}
\newmacro{\rv}{X}
\newmacro{\altrv}{Y}
\newmacro{\discrprob}{p}
\newmacro{\discrprobalt}{q}
\NewDocumentCommand{\measures}{O{\samples^2} e{^}}{\mathcal{M}\IfValueT{#2}{^{#2}}(#1)}
\newcommand{\base}[1][\coupling]{{#1}_0}
\newmacro{\scalar}{u}
\newmacro{\exponent}{p}
\newmacro{\exponentalt}{q}
\newmacro{\pexp}{p}
\newmacro{\qexp}{q}
\newmacro{\volcst}{V}
\newmacro{\funcs}{\mathcal{F}}
\newmacro{\func}{\obj}
\newmacro{\funcalt}{\objalt}
\newmacro{\funcaltalt}{\objaltalt}
\newmacro{\proper}{\tau}		
\newmacro{\error}{e}		
\newmacro{\Ebound}{B}		
\newmacro{\bias}{b}		
\newmacro{\brown}{W}		
\newmacro{\serror}{\theta}		
\newmacro{\snoise}{\xi}		
\newmacro{\sbias}{\psi}		
\newmacro{\sbound}{M}		
\newmacro{\bbound}{B}		
\newmacro{\noisepar}{\sdev}		
\newmacro{\noisevar}{\variance}		
\newcounter{cnstcnt}
\newmacro{\uncertainty}{U}
\NewDocumentCommand{\risk}{ooo}{
    {\mathcal{R}}\IfValueT{#2}{^{#2}}\IfValueT{#1}{_{#1}}(\IfValueTF{#3}{#3}{\obj})
}
\NewDocumentCommand{\emprisk}{ooo}{
    \widehat{\mathcal{R}}\IfValueT{#2}{^{#2}}\IfValueT{#1}{_{#1}}(\IfValueTF{#3}{#3}{\obj})
}
\newacro{LHS}{left-hand side}
\newacro{RHS}{right-hand side}
\newacro{iid}[i.i.d.]{independent and identically distributed}
\newacro{lsc}[l.s.c.]{lower semi-continuous}
\newacro{usc}[u.s.c.]{upper semi-continuous}
\newacro{rv}[r.v.]{random variable}
\newacro{NE}{Nash equilibrium}
\newacro{ABP}{abstract Bregman proximal}
\newacro{BP}{Bregman proximal}
\newacro{DGF}{distance-generating function}
\newacro{EG}{extra-gradient}
\newacro{MP}{mirror-prox}
\newacro{MD}{mirror descent}
\newacro{OMD}{optimistic mirror descent}
\newacro{OMWU}{optimistic multiplicative weights update}
\newacro{PMP}{past mirror-prox}
\newacro{AMP}{abstract mirror-prox}
\newacro{MPT}{mirror-prox template}
\newacro{VI}{variational inequality}
\newacro{VIP}{variational inequality problem}
\newacro{KKT}{Karush\textendash Kuhn\textendash Tucker}
\newacro{FOS}{first-order stationary}
\newacro{SOO}{second-order optimality}
\newacro{SOS}{second-order sufficient}
\newacro{DGF}{distance-generating function}
\newacro{SFO}{stochastic first-order oracle}
\newacro{DRO}{distributionally robust optimization}
\newacro{WDRO}{Wasserstein distributionally robust optimization}
\newacro{MMD}{Maximum Mean Discrepancy}
\newacro{ML}{machine learning}
\newacro{SVM}{support vector machines}
\newacro{ERM}{empirical risk minimization}
\newacro{OT}{optimal transport}
\newacro{ELBO}{evidence lower bound}
\newacro{MCMC}{Monte Carlo Markov Chain}
\newacro{SAEM}{stochastic approximation expectation-maximization}
\newacro{AD}{automatic differentiation}
\newacro{OR}{operational research}
\newacro{PAC}{probably approximately correct}
\newacro{SA}{stochastic approximation}
\newacro{KL}{Kullback-Leibler}
\newacro{SGD}{stochastic gradient descent}
\title{Derivatives of Stochastic Gradient Descent in parametric optimization}
\author{%
  Franck Iutzeler\\
  Université Paul Sabatier, \\
  Institut de Mathématiques de\\
  Toulouse, France.
\And
  Edouard Pauwels\\Toulouse School of Economics,\\ 
  Universit\'e Toulouse Capitole,\\
  Toulouse, France.\\
\And
  Samuel Vaiter\\
  CNRS \& \\
  Université Côte d'Azur,\\
  Laboratoire J. A. Dieudonné.\\
  Nice, France.
}
\begin{document}

\maketitle

\begin{abstract}
 We consider stochastic optimization problems where the objective depends on some parameter, as commonly found in hyperparameter optimization for instance. We investigate the behavior of the derivatives of the iterates of Stochastic Gradient Descent (SGD) with respect to that parameter and show that they are driven by an inexact SGD recursion on a different objective function, perturbed by the convergence of the original SGD. This enables us to establish that the derivatives of SGD converge to the derivative of the solution mapping in terms of mean squared error whenever the objective is strongly convex. Specifically, we demonstrate that with constant step-sizes, these derivatives stabilize within a noise ball centered at the solution derivative, and that with vanishing step-sizes they exhibit $O(\log(k)^2 / k)$ convergence rates. Additionally, we prove exponential convergence in the interpolation regime.  Our theoretical findings are illustrated by numerical experiments on synthetic tasks.
\end{abstract}

\section{Introduction}
\label{sec:intro}
The differentiation of iterative algorithms has been a subject of research since the 1990s~\citep{gilbert1992automatic, christianson1994reverse, beck1994automatic}, and was succinctly described as ``piggyback differentiation'' by \citet{griewank2003piggyback}.
This idea has gained renewed interest within the machine learning community, particularly for applications such as hyperparameter optimization~\citep{maclaurin2015gradient, franceschi2017forward}, meta-learning~\citep{finn2017model, rajeswaran2019meta}, and learning discretization of total variation~\citep{chambolle2021learning, bogensperger2022convergence}.
When applied to an optimization problem, an important theoretical concern is the convergence of the derivatives of iterates to the derivatives of the solution.
Traditional guarantees focus on asymptotic convergence to the solution derivative, as described by the implicit function theorem~\citep{gilbert1992automatic, christianson1994reverse, beck1994automatic}.
This issue has inspired recent works for smooth optimization algorithms~\citep{mehmood2020automatic, mehmood2022fixed}, generic nonsmooth iterations~\citep{bolte2022automatic}, and second-order methods~\citep{bolte2023one}.

Convergence analysis of iterative processes have predominantly focused on deterministic algorithms such as the gradient descent.
In this work, we extend these results in the context of strongly convex parametric optimization by studying the iterative differentiation of the Stochastic Gradient Descent (SGD) algorithm.
Since the seminal work of \citet{robbins1951stochastic}, SGD has been a workhorse of stochastic optimization and is extensively employed in training various machine learning models~\citep{bottou2018optimization, gower2019sgd}.
A critical aspect of our work is based on the fact that the sequence of iterative derivatives in this stochastic setting is itself a stochastic gradient sequence.

The goal of this work is to answer the following question:
\begin{quote}
    \emph{What is the dynamics of the derivatives of the iterates of stochastic gradient descent in the context of minimization of parametric strongly convex functions?}
\end{quote}
Our motivation for studying this question is twofold. First, while iterative differentiation through SGD sequences is possibly not the most efficient way to differentiate solutions of convex programs, it is very natural in the context of differentiable programming and has already been motivated and explored in the machine learning literature \citep{maclaurin2015gradient,pedregosa2016hyperparameter,finn2017model,ji2022theoretical}. Second, existing attempts to provide stochastic oracle based methods to differentiate through convex programming solutions require more intricate algorithmic schemes than the conceptually simple iterative differentiation of SGD. Despite its conceptual simplicity, the answer to this question is not direct in the first place due to the joint effect of noise on the iterate sequence and its derivatives.

\paragraph{Contributions.} The strongly convex setting ensures that the solution mapping is single valued and differentiable under appropriate smoothness assumptions.
In this setting, we prove in \Cref{th:mainResultConvergenceDerivatives} the \textbf{convergence of the derivatives of the SGD recursion toward the derivative of the solution mapping}, in the sense of mean squared errors:\\
{\small$\bullet$} We first provide a general result for non-increasing step-sizes converging to some $\step \geq 0$ (covering constant step-sizes schedules), for which we prove that the derivatives of SGD eventually fluctuate in a ball centered at the solution derivative, of size proportional to $\sqrt{\step}$.\\
{\small$\bullet$} With vanishing steps, this result implies that the derivatives of SGD converge toward the solution derivatives, and we obtain $O(\log(k)^2 / k)$ convergence rates for $O(1/k)$ step-size decay schedules.\\
{\small$\bullet$} We also study the interpolation regime, for which we show that the derivatives converge exponentially fast toward the derivative of the solution mapping.\\
All these results suggest that derivatives of SGD sequences behave \emph{qualitatively} similarly as the original SGD sequence under typical step size regimes.

The key insight in proving these results is to interpret the recursion describing \textbf{the derivatives of SGD as a perturbed SGD sequence}, or SGD with errors, related to a quadratic parametric optimization problem involving the second order derivatives at the solution of the original problem. We perform a general abstract analysis of inexact SGD recursions, that is, SGD with an additional error term which is not required to have zero mean. This constitutes a result of independent interest, which we apply to the sequence of SGD derivatives in order to prove their convergence toward the derivative of the solution mapping. The developed theory is illustrated with numerical experiments on synthetic tasks.
We believe our work paves the way to a better understanding of stochastic hyperparameter optimization, and more generally stochastic meta-learning strategies.

\paragraph{Related works.}
Differentiating through algorithms is closely associated with the broader concept of \emph{automatic differentiation}~\citep{griewank1989automatic}.
In practice, it is implemented  using either the forward mode~\citep{wengert1964simpleautomatic}, or the more common reverse mode~\citep{rumelhart1986learningrepresentations} known as backpropagation.
For detailed surveys, see~\citep{griewank1993derivative} or \citep{griewank2008evaluating,baydin2018automatic}.
Modern machine learning is intrinsically linked to this idea through the use of  Python frameworks like Tensorflow~\citep{tensorflow2015-whitepaper}, PyTorch~\citep{paszke2019pytorch}, and JAX~\citep{jax2018github,blondel2022efficient}.
When using the reverse mode, a limitation of this method is the need to retain every iteration of the inner optimization process in memory, although this challenge can be mitigated by employing checkpointing, invertible optimization algorithms~\citep{maclaurin2015gradient}, by utilizing truncated backpropagation~\citep{shaban2019truncated}, Jacobian-free backpropagation~\citep{WuFung2022JFB} or one-step differentiation~\citep{bolte2023one}.

Along with iterative differentiation (ITD), (approximate) implicit differentiation (AID) plays an increasing important role, sometimes under the name implicit deep learning.
\citet{elghaoui2021implicit} highlights the utility of fixed-point equations in defining hidden features, and
\citep{bai2019deep} proposes equilibrium points for sequence models, reducing memory consumption significantly.
Further, \citep{bertrand2020implicit,agrawal2019differentiable} expands implicit differentiation's applications to high-dimensional, non-smooth problems and convex programs.
\citet{ablin2020super} emphasizes the computational benefits of automatic differentiation, particularly in min-min optimization.
In particular, OptNet~\citep{amos2017optnet} and Deep Equilibrium Models (DEQ)~\citep{bai2019deep} are examples of relevant applications.

Hypergradient estimation through iterative differentiation or implicit differentiation has a long story in machine learning~\citep{pedregosa2016hyperparameter,lorraine2020optimizing}.
In the context of imaging, iterative differentiation was used to perform hyperparameter selection through the Stein's unbiased risk estimator~\citep{deledalle2014sugar}, and also for refitting procedure~\citep{deledalle2017clear}. 
Model-agnostic Meta-learning (MAML) was introduced by \citet{finn2017model} as a methodology to train neural architectures that adapt to new tasks through iterative differentiation (meta-learning). It was later adapted to implicit differentiation~\citep{rajeswaran2019meta}. These developments motivated further studies of the bilevel programming problem in a machine learning context \citep{franceschi2018bilevel,grazzi2020iteration}. 

The literature on the stochastic iterative and implicit differentiation is more limited.
In the stochastic setting, \citet{grazzi2021convergence,grazzi2023bilevel,grazzi2024nonsmooth} considered implicit differentiation, mostly as a stochastic approximation to solve the implicit differentiation linear equation or use independent copies for the derivative part.
In general stochastic approaches for bilevel optimization sample different batches for the iterate and derivative recursions. Here we \emph{jointly analyze both recursion} with the same samples.
Despite this lack of systemic theoretical analysis of convergence, differentiating through the SGD iterates is mentioned in~\cite{maclaurin2015gradient} which is focused on an efficient implementation of backpropagation through SGD, \cite{pedregosa2016hyperparameter} which explicitly calls for the development of differentiation techniques for stochastic optimization algorithms. Furhtermore \cite{finn2017model,ji2021bilevel} suggests explicitely to use differentiation through stochastic first order solvers and this was further explicitely considered by \cite{ji2022theoretical} in a meta-learning context.

Closely related to the general issue of differentiating parametric optimization problems is solving bilevel optimization, where the Jacobian of the inner problem is crucial to analyze.
\citet{chen2021closing} introduces a method, demonstrating improved convergence rates for stochastic nested problems through a unified SGD approach.
In the same vein, \citet{arbel2021amortized} leverages inexact implicit differentiation and warm-start strategies to match the computational efficiency of oracle methods, proving effective in hyperparameter optimization.
Additionally, the work \citep{ji2021bilevel} provides a thorough convergence analysis for AID and ITD-based methods, proposing the novel stocBiO algorithm for enhanced sample complexity.
Furthermore, \citep{dagreou2022framework,dagreou2024lower} introduce a novel framework allowing unbiased gradient estimates and variance reduction methods for stochastic bilevel optimization.

Although this is not the initial focus of this work, the technical bulk of our arguments requires an analysis of \emph{perturbed, or inexact, SGD sequences}. This amounts to study the robustness of the stochastic gradient algorithm with non-centered noise, or equivalently non-vanishing deterministic errors. Such questioning around robustness to errors have existed for decades in the stochastic approximation literature, see for example \citep{ermoliev1983stochastic,chen1987convergence} and references therein. Many existing results presented in the literature are qualitative and relate to nonconvex optimization \citep{solodov1998error,borkar2009stochastic,doucet2017asymptotic,ramaswamy2017analysis,dieuleveut2023stochastic}. Let us also mention the smooth convex setting for which inexact oracles have been studied by \citep{nedic2010effect,devolder2014first}. A recent account of existing convergence results for biased SGD is given by \cite{demidovich2023guide}. As a by-product of our arguments, we provide a general mean squared error convergence analysis of inexact SGD for a diversity of step size regimes, in the smooth, strongly convex setting. Our analysis allows to handle random non stationary bias terms, whose magnitude depend on the iteration counter $k$. This is customary as the errors in the sequence of derivatives are due to the suboptimality of the sequence of iterates. These errors thus depend on the realization of the iterate sequence, requiring a dedicated analysis not covered by existing art \cite{demidovich2023guide}.

\section{The derivative of SGD is inexact SGD}
\label{sec:derivativeOfSGDisSGD}
\subsection{Intuitive overview}
We consider a parametric stochastic optimization problem of the form
\begin{align}
\label{eq:opt}
\tag{Opt}
\optP = 
\arg\min\nolimits_{\point\in\Rd}\;
	\obj(\point,\param) \defeq \ex{\sobj(\point,\param;\sample)}[\sample\sim\prob] \, 
\end{align}
where $\obj \colon \Rd \times \params \to \R$ is smooth and strongly convex in $\point$ for a fixed $\param$. The stochastic gradient descent algorithm, \ac{SGD}, is defined by an initialization $\beforeinitP$, and for $\run \in \N$
\begin{align}
\label{eq:SGD}
\tag{SGD}
    \nextP = \currP - \curr[\step] \sgrad( \currP , \param ; \next[\sample])
\end{align}
where $\seqinf{\step}{\run}$ is a sequence of positive step-sizes and $\seqinf{\sample}{\run}$ is a sequence of independent random variables with common distribution $\prob$. Precise assumptions on the problem and the algorithm will be given in \Cref{sec:mainAssumptions} to ensure convergence.
We highlight here that both the objective $f(\point, \param, \sample)$ and the initialization of the algorithm $\beforeinitP$ depend on some parameter $\param \in \params \subset \R^\dimsalt$, and so do the iterates and optimal solution.

For any $\param\in\params$ and any $\run\geq\beforestart$, under appropriate assumptions, the Jacobian of $\currP$ \wrt $\param$, denoted by $\derparam \currP \in \R^{\dims\times\dimsalt}$, is well defined and obeys the following recursion from the chain rule of differentiation:
\begin{align}
    \tag{SGD'}
    \label{eq:derRecrusion}
   \derparam \nextP &= \derparam \currP  
   - \curr[\step] \dderpoint\sobj(\currP[\point],\param;\next[\sample])   \derparam \currP 
   - \curr[\step]   \dderpointparam\sobj(\currP[\point],\param;\next[\sample]) .
\end{align}
The natural limit candidate for this recursion is the Jacobian of the solution, $\derparam \optP$, which,  from the implicit function theorem, is the unique solution to the following linear system
\begin{align}
    \dderpoint \obj(\optP,\param) D + \dderpointparam\obj(\optP,\param) = \ex{\dderpoint \sobj(\optP,\param;\sample) D + \dderpointparam\sobj(\optP,\param;\sample)}[\sample\sim\prob] = 0.
\end{align}
As noted in \cite[Proposition 1]{arbel2021amortized}, this is equivalently characterized as a solution to the following stochastic minimization problem
\begin{align}
\label{eq:optDer}
\tag{Opt'}
\derparam\optP &= 
\arg\min\nolimits_{D\in\R^{\dims\times\dimsalt}}\;
	 \ex{\left\langle\frac{1}{2}\dderpoint \sobj(\optP,\param;\sample) D + \dderpointparam\sobj(\optP,\param;\sample), D\right\rangle}[\sample\sim\prob]
\end{align}
where we use the standard Frobenius inner product over matrices. Our key insight is to formally understand the recursion in \eqref{eq:derRecrusion} as an inexact SGD sequence applied to problem \eqref{eq:optDer}. 

\paragraph{Intuition from the quadratic case.} Consider two maps $\sample \mapsto Q(\sample) \in \mathbb{R}^{\dims \times \dims}$ and $\sample \mapsto B(\sample) \in \mathbb{R}^{\dims \times \dimsalt}$.
Let $\sobj(\point,\param;\sample) = \frac{1}{2} \point^\top Q(\sample) \point + \point^\top B(\sample) \param$, then the recursion in \eqref{eq:derRecrusion} becomes 
\begin{align}
   \derparam \nextP &= \derparam \currP  
   - \curr[\step] (Q(\next[\sample])   \derparam \currP 
   + B(\next[\sample])) .
\end{align}
which is exactly a stochastic gradient descent sequence for problem \eqref{eq:optDer}.
Hence, choosing appropriate step sizes ensures convergence. Beyond the quadratic setting, one needs to take into consideration the fact that the second order derivatives of $\sobj$ are not constant, leading to our interpretation as \emph{perturbed} stochastic gradient iterates for the derivatives, as detailed below.

\paragraph{The general case.} We rewrite the recursion \eqref{eq:derRecrusion} as follows
\begin{align}
    \label{eq:derRecrusion2}
   \derparam \nextP \!=& \derparam \currP  
   - \curr[\step] \dderpoint\sobj(\optP,\param;\next[\sample])   \derparam \currP 
   - \curr[\step]   \dderpointparam\sobj(\optP,\param;\next[\sample]) + e_{k+1} ,
\end{align}
where the error term $e_{k+1}$ is defined as
\begin{align}
    \label{eq:derRecrusion2errorterm}
   e_{k+1} &= \curr[\step] \left( \dderpoint\sobj(\optP,\param;\next[\sample]) 
 - \dderpoint\sobj(\currP[\point],\param;\next[\sample])\right)   \derparam \currP \\
   &\quad+  \curr[\step] \left(\dderpointparam\sobj(\optP,\param;\next[\sample]) -\dderpointparam\sobj(\currP[\point],\param;\next[\sample]) \right) \,.
\end{align}
Assuming that the second derivative of $\sobj$ is Lipschitz-continuous, the error term $e_{k+1}$
is of the same order as $ \curr[\step] \|\currP - \optP\| (1 + \|\derparam \currP \|)$. Our main contribution is a careful analysis of a specific version of inexact SGD which covers the above recursion. Under typical stochastic approximation assumptions, the convergence of $\curr[\point](\param)$ toward $\optP$ essentially entails the convergence of $\derparam \currP$ toward $\derparam \optP$.
This allows us to carry out a joint convergence analysis of both sequences in \eqref{eq:SGD} and \eqref{eq:derRecrusion}. We now describe the assumptions required to make  this intuition rigorous.

\subsection{Main assumptions}
\label{sec:mainAssumptions}
We start with the stochastic objective, $\sobj$ in \eqref{eq:opt} and then specify assumptions on the underlying random variable $\sample$.
The crucial assumption for our results is strong convexity. The rest of the assumptions are typically satisfied in applications such as hyper parameter tuning. We point out that both examples in the numerical section satisfy our assumptions and are implemented in the regime described by our main theorem.
\begin{assumption}
\label{asm:obj} Let $\params$ be an open Euclidean subset of $\R^\dimsalt$ and $\samples$ be a measure space.
The function $\sobj\from\Rd\times\params\times\samples\to\R$ satisfies the following conditions:
\begin{enumerate}
[left=\parindent,label=\upshape(\itshape\alph*\hspace*{.5pt}\upshape)]
\item
\label[noref]{asm:obj-diff}
\emph{Differentiability:} $\sobj(\cdot,\cdot;\sample)$ is $C^2$, with $M$-Lipschitz Hessian (in Frobenius norm), for all $\sample\in\samples$. 
\item
\label[noref]{asm:obj-smooth}
\emph{Smoothness:}  $\derpoint\sobj(\cdot,\param;\sample)$ is $\smooth$-Lipschitz and $\derpoint\sobj(\point,\cdot;\sample)$ is $\smooth'$-Lipschitz for all $\point,\param$ and $\sample\in\samples$. 
\item
\label[noref]{asm:obj-strong}
\emph{Strong convexity:}
$\sobj(\cdot,\param;\sample)$ is $\strong$-strongly convex 
for all $\param\in\params$ and $\sample \in \samples$. 
\end{enumerate}
\end{assumption}

\cref{asm:obj}\ref{asm:obj-smooth} entails that $\dderpoint\sobj$ and $\dderpointparam\sobj$ are uniformly bounded in operator norm by $L$ and $L'$ respectively. We remark that our smoothness assumption is global in $x$, but possibly only local in $\param$ since $\params$ is an arbitrary open neighborhood, so that \cref{asm:obj}\ref{asm:obj-smooth} does not require global Lipschicity with respect to the variable $\param$.
\cref{asm:obj}\ref{asm:obj-strong} implies that $\obj(\cdot,\param)$ has a unique minimizer that we will denote by $\optP$; it also implies that $\dderpoint\sobj$ is positive definite. This is actually the strongest part of \cref{asm:obj}, it is somewhat a requirement since the problem of differentiating the solution to an optimization problem, not necessarily strongly convex, is not settled for the moment.

As a consequence of \Cref{asm:obj}, the derivative sequence in \eqref{eq:derRecrusion} is almost surely bounded\footnote{This does not depend on the randomness structure detailed in \Cref{asm:noise}.}. This is proved in \Cref{sec:proofMainText}.

\begin{lemma}
    \label{lem:derivativeBounded}
    Under \cref{asm:obj}, assuming that $\curr[\step] \leq \frac{1}{\smooth}$ for all $\run$, we have almost surely $\norm{\derparam \currP } \leq \max\{\norm{\derparam x_0(\param)}, \sqrt{m} \smooth' /\mu\}$.
\end{lemma}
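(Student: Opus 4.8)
The plan is to read \eqref{eq:derRecrusion} as an affine matrix recursion with a uniformly contracting linear part and then run a one-step ``invariant ball'' induction; the whole argument is pathwise in the samples $\seqinf{\sample}{\run}$, which is why the bound holds almost surely. Fix $\param$, write all matrix norms as Frobenius norms $\norm{\cdot}$ and operator norms as $\opnorm{\cdot}$, and abbreviate $D_\run \defeq \derparam \currP$ (so $D_0 = \derparam \beforeinitP$), $H_\run \defeq \dderpoint\sobj(\currP,\param;\next[\sample])$ and $B_\run \defeq \dderpointparam\sobj(\currP,\param;\next[\sample])$, so that \eqref{eq:derRecrusion} becomes
\[
   D_{\run+1} = (\eye - \curr[\step] H_\run)\, D_\run - \curr[\step]\, B_\run .
\]

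First I would collect the three pointwise consequences of \Cref{asm:obj}: strong convexity gives $H_\run \succeq \strong \eye$, hence $\tr(D_\run^\top H_\run D_\run) \geq \strong \norm{D_\run}^2$; smoothness gives $\opnorm{H_\run} \leq \smooth$, hence $\norm{H_\run D_\run} \leq \smooth \norm{D_\run}$; and smoothness also bounds the off-diagonal Hessian block, $\opnorm{B_\run} \leq \smooth$, so, since $B_\run \in \R^{\dims\times\dimsalt}$ has $\dimsalt$ columns, $\norm{B_\run} \leq \sqrt{\dimsalt}\,\smooth$. Expanding the squared norm of the linear part,
\[
   \norm{(\eye - \curr[\step] H_\run) D_\run}^2 = \norm{D_\run}^2 - 2\curr[\step]\tr(D_\run^\top H_\run D_\run) + \curr[\step]^2\norm{H_\run D_\run}^2 \leq \bigl(1 - 2\curr[\step]\strong + \curr[\step]^2\smooth^2\bigr)\norm{D_\run}^2 ,
\]
and here is where the hypothesis $\curr[\step]\leq\strong/\smooth^2$ enters: it gives $\curr[\step]^2\smooth^2 \leq \curr[\step]\strong$, so $1 - 2\curr[\step]\strong + \curr[\step]^2\smooth^2 \leq 1 - \curr[\step]\strong \leq (1 - \tfrac12\curr[\step]\strong)^2$. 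Taking square roots (legitimate since $1-\tfrac12\curr[\step]\strong>0$, as $\curr[\step]\strong \le \strong^2/\smooth^2 \le 1$) and combining with the triangle inequality and the bound on $B_\run$ yields the scalar recursion
\[
   \norm{D_{\run+1}} \leq \bigl(1 - \tfrac12\curr[\step]\strong\bigr)\norm{D_\run} + \curr[\step]\sqrt{\dimsalt}\,\smooth .
\]

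Finally I would finish by induction on $\run$. The affine map $r \mapsto (1 - \tfrac12\curr[\step]\strong)r + \curr[\step]\sqrt{\dimsalt}\,\smooth$ has fixed point $2\sqrt{\dimsalt}\,\smooth/\strong = 2\sqrt{\dimsalt}\,\kappa$ and contracts toward it, so the interval $[0,\,\max\{\norm{D_0}, 2\sqrt{\dimsalt}\,\kappa\}]$ is invariant: if $\norm{D_\run}$ lies in it, then $\norm{D_{\run+1}}$ is a convex combination of $\norm{D_\run}$ and the fixed point (weights $1-\tfrac12\curr[\step]\strong$ and $\tfrac12\curr[\step]\strong$, both in $[0,1]$), hence lies in it too; the base case $\run = 0$ is trivial. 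This gives $\norm{\derparam\currP} \le \max\{\norm{\derparam \beforeinitP}, 2\sqrt{\dimsalt}\,\kappa\}$, and since $\kappa \ge 1$ we have $\kappa \le (\kappa+1)^2$, so the stated bound $\max\{\norm{\derparam \beforeinitP}, 2\sqrt{\dimsalt}(\kappa+1)^2\}$ is a convenient over-estimate — of the kind one also obtains by carrying the $1 + \norm{D_\run}$ factor that appears in the error analysis of \eqref{eq:derRecrusion2}. I do not expect a genuine obstacle; the only points needing care are invoking the step-size bound exactly at the $\curr[\step]^2\smooth^2$ term, the Frobenius-versus-operator-norm accounting in $\norm{H_\run D_\run}\le\opnorm{H_\run}\norm{D_\run}$ and $\norm{B_\run}\le\sqrt{\dimsalt}\,\opnorm{B_\run}$, and checking the invariant-ball step keeps the $\max$ with $\norm{D_0}$.
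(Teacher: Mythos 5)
Your proof is correct, and it takes a genuinely different (and in fact sharper) route than the paper. The paper augments the recursion: it stacks $\derparam x_\run(\param)$ on top of an $\dimsalt\times\dimsalt$ identity block, writes $D_{\run+1}=D_\run-\curr[\step]J_\run D_\run$, and analyzes $\sqnorm{D_\run}$ via the inner product $\langle D_\run, J_\run D_\run\rangle$; the conclusion there is only that the Frobenius norm is non-increasing once it exceeds the threshold $2\sqrt{\dimsalt}(\kappa+1)$, while below that threshold it can inflate by a factor $1+\smooth\curr[\step]\leq 1+\kappa$, which is exactly where the looser constant $2\sqrt{\dimsalt}(\kappa+1)^2$ comes from. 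You instead keep the recursion in its affine form $D_{\run+1}=(\eye-\curr[\step]H_\run)D_\run-\curr[\step]B_\run$, prove a genuine per-step contraction $\norm{(\eye-\curr[\step]H_\run)D_\run}\leq(1-\tfrac12\curr[\step]\strong)\norm{D_\run}$ under the same step-size condition $\curr[\step]\leq\strong/\smooth^2$ (via $1-2\curr[\step]\strong+\curr[\step]^2\smooth^2\leq 1-\curr[\step]\strong\leq(1-\tfrac12\curr[\step]\strong)^2$), and then run an invariant-interval induction around the fixed point $2\sqrt{\dimsalt}\,\smooth/\strong=2\sqrt{\dimsalt}\,\kappa$ of the affine envelope. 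Both arguments are pathwise and use the same ingredients — $H_\run\succeq\strong\eye$, operator-norm bound $\smooth$ on both second-derivative blocks, and the $\sqrt{\dimsalt}$ factor from the column count of $\dderpointparam\sobj$ — but your contraction-plus-drift argument avoids the augmentation and the two-case (large/small norm) analysis, and it buys the tighter bound $\max\{\norm{\derparam x_0(\param)},2\sqrt{\dimsalt}\,\kappa\}$, which implies the stated one since $\kappa\leq(\kappa+1)^2$. The only points worth stating explicitly in a polished write-up are the ones you already flag: $1-\tfrac12\curr[\step]\strong\in(0,1]$ because $\curr[\step]\strong\leq\strong^2/\smooth^2\leq 1$, and the justification $\norm{B_\run}\leq\sqrt{\dimsalt}\,\opnorm{B_\run}\leq\sqrt{\dimsalt}\,\smooth$ from the joint Lipschitz continuity of $\derpoint\sobj$ in $(\point,\param)$, which is exactly the consequence of \cref{asm:obj} the paper records after the assumption.
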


We now specify the structure of the random variables $\seqinf{\sample}{\run}$ appearing in the recursions \eqref{eq:SGD} and \eqref{eq:derRecrusion}. In particular, we follow the classical approach of \citep{bottou2018optimization,gower2019sgd} among a rich literature for our variance condition.

\begin{assumption}
\label{asm:noise}
The observed noise sequence $\seqinf{\sample}{\run}$ is independent identically distributed with common distribution $\prob$ on $\samples$. Furthermore, 
\begin{enumerate}
[left=\parindent,label=\upshape(\itshape\alph*\hspace*{.5pt}\upshape)]
\item
\label[noref]{asm:noise-growth}
\emph{Variance control:} there is $\noisedev \ge 0$ such that for all $\param\in\params$,
    \begin{align}
        \ex{\sqnorm{\sgrad(\optP,\param;\sample) } } &\leq \noisevar,&\ex{\sqnorm{\dderpoint \sobj(\optP,\param;\sample) \derparam \optP + \dderpointparam\sobj(\optP,\param;\sample)}} \leq \sigma^2.
    \end{align}
\item
\label[noref]{asm:noise-integ}
\emph{Integrability:} $\sobj(\point,\param;\cdot)$ and $\nabla \sobj(\point,\param;\cdot)$ are integrable \wrt $\prob$ for a certain fixed pair $\point \in \R^d$, $\param\in\params$.
\end{enumerate}
\end{assumption}

Note that we control the second moment only \emph{at the solution}, which means that the case $\noisevar=0$ corresponds to the interpolation scenario but does \emph{not} mean that the algorithm is noiseless. Furthermore, we also control the second moment of the second derivative (in Frobenius norm). This is not typical in the SGD literature but is required here to analyze the sequence of derivatives (this is illustrated in the \emph{simple interpolation} case of \cref{fig:lstsq}). 
\cref{asm:obj}\ref{asm:obj-diff} and \ref{asm:obj-smooth} together with \cref{asm:noise} imply that one can permute expectation and derivative up to order 2, as detailed in    \Cref{sec:justificationDerivativeIntegral}.

In this setting, we use the natural filtration $(\curr[\filter])_{\run \in \N}$ where for all $\run$,  $\curr[\filter]$ is defined as the $\sigma$-algebra generated by $\beforeinit[\sample], \dots, \curr[\sample]$. Note that $\next[\sample]$ and thus $\sgrad( \currP ,\param ; \next[\sample])$ is not $\curr[\filter]$-measurable but $\next[\filter]$-measurable. 

\subsection{Main result on the convergence of the derivatives of SGD}
The following is the main result of this paper. Its proof is postponed to \Cref{sec:proofMainResult}.

\begin{theorem}[Convergence of the derivatives of SGD]
    \label{th:mainResultConvergenceDerivatives}
    Let $\params \subset \R^m$ be open, $\samples$ be a measure space and $\sobj\from\Rd\times\params\times\samples\to\R$ be as in \Cref{asm:obj}. Set $\kappa = {L}/{\mu}$, the condition number. Let $\seqinf{\sample}{\run}$ be a sequence of independent variables on $\samples$, as in \Cref{asm:noise}. Let $\seqinfin{\curr[\step]}{\run}$ be a positive, non-increasing, non-summable sequence with  $\beforeinit[\step] \leq   \frac{\strong}{4\smooth^2} = \frac{1}{\mu} \frac{1}{4 \kappa^2}$ and $(x_k(\param))_{k \in \N}$ be defined as in \eqref{eq:SGD}. Then:

   \noindent{\small$\bullet$} General estimates: setting $\step = \lim_{k \to \infty} \curr[\step]$, we have
         \begin{align}
		{\lim\sup}_{k \to \infty} \quad   \ex{\sqnorm{\derparam\currP - \derparam\optP}} &\leq  \frac{4\noisevar\eta}{\mu} \left(1 + \frac{3 M   (1 +  \max\{\norm{\derparam x_0(\param)}, \sqrt{m} \smooth' /\mu\} )}{\strong}\right)^2.
    \end{align}
    
   \noindent{\small$\bullet$} Sublinear rate: if for all $k$, $ \curr[\step]=\frac{1}{\mu} \frac{2}{\run + 8\kappa^2}$, then
    \begin{align}
            \ex{\sqnorm{\derparam\currP - \derparam\optP}} =  O\left( \frac{\log(k + 8 \kappa^2)^2}{k+8\kappa^2}\right).
    \end{align}
    where the constants in the big $O$ are polynomials in $\kappa$, $\sqnorm{\beforeinitP - \optP}$, $\sqnorm{\derparam\beforeinitP - \derparam\optP}$, $\noisevar$, $\frac{1}{\mu}$, $M$ and $\sqrt{m}$.
    
    \noindent{\small$\bullet$} Interpolation regime: if $\sigma = 0$ and $\eta_k = \frac{\mu}{4L^2}$ for all $k \in \N$, then 
    \begin{align}
        \ex{\sqnorm{\derparam\currP - \derparam\optP}} = O\left(k \left(1 - \frac{1}{8 \kappa^2} \right)^k\right).
    \end{align}
\end{theorem}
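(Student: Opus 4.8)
The plan is to make rigorous the informal identification sketched around~\eqref{eq:derRecrusion2}: the derivative recursion~\eqref{eq:derRecrusion} is an \emph{inexact} SGD recursion for the quadratic problem~\eqref{eq:optDer}, perturbed by an error controlled by the distance of the primal iterates to $\optP$. I would therefore analyse~\eqref{eq:SGD} and~\eqref{eq:derRecrusion} \emph{jointly}, via a single robust one-step estimate, and then read off the three step-size regimes. \textbf{Step 1 (reduction to inexact SGD).} Write $D_k := \derparam\currP$ and $D^\star := \derparam\optP$. \Cref{lem:derivativeBounded} together with the initialisation $\derparam x_0(\param)=0$ gives the almost-sure bound $\norm{D_k}\le C_D := 2\sqrt{m}\,(\kappa+1)^2$. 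Rewriting~\eqref{eq:derRecrusion} as in~\eqref{eq:derRecrusion2} yields
\begin{align*}
   D_{k+1} \;=\; D_k \,-\, \eta_k\, v_{k+1} \,+\, \eta_k\, \varepsilon_{k+1},
\end{align*}
where $v_{k+1} := \dderpoint\sobj(\optP,\param;\xi_{k+1})\,D_k + \dderpointparam\sobj(\optP,\param;\xi_{k+1})$ is, conditionally on $\mathcal{F}_k$, an unbiased stochastic gradient at $D_k$ of the quadratic objective of~\eqref{eq:optDer} --- which by \Cref{asm:obj} is $\mu$-strongly convex and $L$-smooth, has $D^\star$ as unique minimiser, and whose stochastic gradient has second moment at most $\sigma^2$ at $D^\star$ (\Cref{asm:noise}\ref{asm:noise-growth}) --- and where, by the $M$-Lipschitz continuity of the Hessian of $\sobj$ (\Cref{asm:obj}\ref{asm:obj-diff}) and the bound on $\norm{D_k}$, one has $\norm{\varepsilon_{k+1}}\le M(1+C_D)\,\norm{\currP-\optP}$ almost surely.

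\textbf{Step 2 (a robust one-step estimate and the coupled system).} I would first establish an abstract inequality for recursions $z_{k+1}=z_k-\eta_k g_{k+1}+\eta_k\varepsilon_{k+1}$, where $g_{k+1}$ is a conditionally unbiased stochastic gradient of a $\mu$-strongly convex, $L$-smooth function with minimiser $z^\star$ and gradient second moment $\le\varsigma^2$ at $z^\star$, under $\eta_k\le\mu/(4L^2)$: expanding the square, bounding $\condex{\norm{g_{k+1}}^2}[\mathcal{F}_k]\le 2L^2\norm{z_k-z^\star}^2+2\varsigma^2$, using strong monotonicity, and splitting the non-centered cross term $2\eta_k\langle z_k-z^\star,\varepsilon_{k+1}\rangle$ by Young's inequality (absorbing its quadratic part into the contraction) yields
\begin{align*}
   \condex{\sqnorm{z_{k+1}-z^\star}}[\mathcal{F}_k] \;\le\; \Bigl(1-\tfrac{\mu\eta_k}{2}\Bigr)\sqnorm{z_k-z^\star} \,+\, c_1\eta_k^2\varsigma^2 \,+\, \tfrac{c_2\eta_k}{\mu}\,\norm{\varepsilon_{k+1}}^2 .
\end{align*}
Applying this to~\eqref{eq:SGD} (with $\varepsilon\equiv 0$, $\varsigma=\sigma$) and to the recursion of Step~1 (with $\varsigma=\sigma$ and $\norm{\varepsilon_{k+1}}\le M(1+C_D)\norm{\currP-\optP}$), writing $a_k:=\ex{\sqnorm{\currP-\optP}}$ and $d_k:=\ex{\sqnorm{D_k-D^\star}}$, and taking total expectations (using $\ex{\norm{\varepsilon_{k+1}}^2}\le M^2(1+C_D)^2 a_k$), gives the coupled scalar system
\begin{align*}
   a_{k+1}\le\Bigl(1-\tfrac{\mu\eta_k}{2}\Bigr)a_k+c_1\eta_k^2\sigma^2, \qquad d_{k+1}\le\Bigl(1-\tfrac{\mu\eta_k}{2}\Bigr)d_k+c_1\eta_k^2\sigma^2+\tfrac{c_2 M^2(1+C_D)^2}{\mu}\,\eta_k\,a_k .
\end{align*}

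\textbf{Step 3 (the three regimes).} Both recursions contract at the common rate $1-\mu\eta_k/2$. \emph{(i) General estimate:} with $\eta_k\equiv\eta:=\lim_k\eta_k$, the geometric recursions give $\limsup_k a_k\le 4\eta\sigma^2/\mu$ and then $\limsup_k d_k\le 4\eta\sigma^2/\mu+(2c_2 M^2(1+C_D)^2/\mu^2)\limsup_k a_k$; combining these in the $\sqrt{\cdot}$ scale (with $1+t^2\le(1+t)^2$) puts the bound in the stated form $\tfrac{4\eta\sigma^2}{\mu}\bigl(1+\tfrac{3M(1+2\sqrt{m}(\kappa+1)^2)}{\mu}\bigr)^2$; since $(\eta_k)$ is non-increasing with limit $\eta$, the passage to the limit only affects the tail and is handled by a standard stability argument. \emph{(ii) Sublinear rate:} for $\eta_k=\tfrac{2}{\mu(k+8\kappa^2)}$ one has $1-\mu\eta_k/2=1-\tfrac{1}{k+8\kappa^2}$, so $\prod_{i=j+1}^{k}\bigl(1-\tfrac{1}{i+8\kappa^2}\bigr)=\tfrac{j+8\kappa^2}{k+8\kappa^2}$ telescopes; summing the $a$-recursion gives $a_k=O\!\bigl(\tfrac{\log(k+8\kappa^2)}{k+8\kappa^2}\bigr)$, feeding this into the $d$-recursion makes its forcing term $O\!\bigl(\tfrac{\log(k+8\kappa^2)}{(k+8\kappa^2)^2}\bigr)$, and a second telescoping summation yields $d_k=O\!\bigl(\tfrac{\log(k+8\kappa^2)^2}{k+8\kappa^2}\bigr)$, with constants polynomial in $\kappa$, $a_0$, $d_0$, $\sigma^2$, $1/\mu$, $M$, $\sqrt{m}$. \emph{(iii) Interpolation:} for $\sigma=0$ and $\eta_k\equiv\mu/(4L^2)$ one gets $a_k\le(1-\tfrac{1}{8\kappa^2})^k a_0$, and the $d$-recursion becomes $d_{k+1}\le(1-\tfrac{1}{8\kappa^2})d_k+c\,(1-\tfrac{1}{8\kappa^2})^k a_0$ --- a forcing at exactly the contraction rate --- so a one-line discrete Grönwall computation gives $d_k\le(1-\tfrac{1}{8\kappa^2})^k(d_0+c\,k\,a_0)=O\!\bigl(k(1-\tfrac{1}{8\kappa^2})^k\bigr)$.

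\textbf{Expected main obstacle.} The delicate part is the robustness analysis with the \emph{non-centered} error $\varepsilon_{k+1}$ of size $O(\eta_k\norm{\currP-\optP})$: one must show it does not accumulate. This rests on (a) the almost-sure boundedness of $D_k$ (\Cref{lem:derivativeBounded}), which is what linearises the Hessian-Lipschitz error in $\norm{\currP-\optP}$; (b) correct filtration bookkeeping --- $\varepsilon_{k+1}$ is $\mathcal{F}_{k+1}$- but not $\mathcal{F}_k$-measurable, so its effect must be bounded through $\condex{\norm{\varepsilon_{k+1}}}[\mathcal{F}_k]$ and a Young split tight enough to be absorbed into the strong-convexity contraction; and (c) the observation that~\eqref{eq:SGD} and~\eqref{eq:derRecrusion} are driven by the \emph{same} contraction factor $1-\mu\eta_k/2$, which is precisely what makes the telescoped products borderline (hence the logarithmic factors) under $O(1/k)$ steps and resonant (hence the prefactor $k$) in the interpolation regime. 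Carrying the explicit constants through the coupling so as to reach $(1+3M(1+2\sqrt{m}(\kappa+1)^2)/\mu)^2$ is tedious but routine once the coupled system above is in hand.
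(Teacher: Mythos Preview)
Your approach is correct and essentially the same as the paper's: reduce \eqref{eq:derRecrusion} to an inexact SGD sequence for \eqref{eq:optDer} via \eqref{eq:derRecrusion2}, bound the error almost surely by $M(1+C_D)\norm{\currP-\optP}$ using \Cref{lem:derivativeBounded} and the Hessian-Lipschitz assumption, and then analyse the resulting coupled scalar recursions in the three step-size regimes. The one tactical difference is how the non-centered cross term is handled. You apply Young's inequality uniformly, yielding $d_{k+1}\le(1-\tfrac{\mu\eta_k}{2})d_k+c_1\eta_k^2\sigma^2+\tfrac{c_2\eta_k}{\mu}\,\mathbb{E}\norm{\varepsilon_{k+1}}^2$ and then reading off each regime from the coupled $(a_k,d_k)$ system. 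The paper instead first keeps the cross term as $2\eta_k B_k D_k$ (via Cauchy--Schwarz in \Cref{prop:cvsgdnoise}) and treats the regimes separately: for the general estimate it solves the quadratic fixed-point equation in $D_k$ directly (\Cref{lem:technicalLemmaSequence}), which is what produces the specific constant $\bigl(1+\tfrac{3M(1+C_D)}{\mu}\bigr)^2$, while for the sublinear and interpolation regimes it does split via Young exactly as you do (\Cref{lem:convergenceOneOverKDerx,lem:convergenceLinearDerx}). So your route is slightly more uniform, at the possible cost of not recovering the precise leading constant $4\sigma^2\eta/\mu$ in the first bullet without a sharper Young parameter; the paper's Cauchy--Schwarz/fixed-point treatment buys that constant but requires the extra \Cref{lem:technicalLemmaSequence}. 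Otherwise the two arguments coincide.
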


The first part of the result provides a general estimate which allows covering virtually all small step-size cases. This includes: i) vanishing step-sizes, for which our result implies convergence of derivatives; and ii) constant step-sizes $\eta$, for which we provide a bound on the distance to the true derivative that is proportional to $\eta$. For the second part, using step-sizes decreasing as $1/\run$, which is a typical setup for the convergence of SGD on strongly convex objectives, our result shows that the derivatives converge as well, with a rate that is asymptotically of the same order, up to log factors.
Finally, the last part of the result relates to the interpolation regime which has drawn a lot of attention in recent years because it captures some of the features of overparameterized deep neural network training \cite{ma2018power,varre2021last,garrigos2023handbook}. Note that the condition $\sigma = 0$ in \Cref{asm:noise} entails that interpolation occurs for both problems \eqref{eq:opt} and \eqref{eq:optDer}, and in this case we obtain exponential convergence of both the iterates and their derivatives, with a constant stepsize, as in the deterministic setting \citep{mehmood2020automatic}.

\begin{remark}
    \label{rem:stepSize}
    The specific stepsize used to obtain the sublinear rate actually applies to any stepsize of the form $\curr[\step] = 2/(c\run+8u)$ for given $c,u>0$ such that $0<c\leq \strong$ and $u \geq L^2/c$. One obtains the same result with $\mu, L$ respectively replaced in the expressions by $\tilde{\mu}\defeq  c \leq \mu$ and $\tilde{L} \defeq \sqrt{ u c} \geq  L$. This corresponds to using a lower estimate for the strong convexity constant and a higher estimate for the smoothness constant, which remain valid. A similar remark holds for the interpolation regime where any stepsize $\eta$ smaller than $\mu/(4L^2)$ will bring the same result with $\kappa$ replaced by $\tilde{\kappa}\defeq 1/(4L\eta)$ in the statement.    
\end{remark}

\begin{remark}
    \label{rem:smallStepSize}
    We consider step sizes at most $\frac{\mu}{4 L^2}$ which is smaller than $\frac{1}{L}$, typically used in optimization. Asside from the $\frac{1}{4}$ factor, which could possibly be improved, it is important to relate it to the the fact that we have obtain $O(1/k)$ rates which represent fast rates for SGD for convex optimization, limited to strongly convex objectives. Second, we do not have any Lipschicity assumption on the objective function itself. This, and the use of small steps to obtain fast rate is in line with related literature such as \cite[Theorem 4.6]{bottou2018optimization}, the discussion following \cite[Theorem 1]{moulines2011non} or \cite[Corollary 5.8 and Theorem 5.9]{garrigos2023handbook}. The possibility to obtain convergence of derivatives of SGD for larger step sizes will be a topic of future research.
\end{remark}

\section{Proof of the main result}
\label{sec:proofMainResultShort}
Our result relies on the interpretation of the recursion \eqref{eq:derRecrusion} as an inexact SGD sequence for the problem \eqref{eq:optDer}. We start with a detailed analysis of inexact SGD under appropriate assumptions. This is an abstract result which we formulate using an abstract function $g$ different from the objective in problems \eqref{eq:opt} and \eqref{eq:optDer} in order to avoid any possible confusion. In particular $g$ is static (does not depend on external parameters) and the obtained convergence result will be then applied to both sequences \eqref{eq:SGD} and \eqref{eq:derRecrusion}.

\subsection{Detour through an auxiliary result: convergence of inexact SGD}

We provide here our template results for the convergence of inexact \ac{SGD}. 
As template, we consider a function $G \colon \R^q\to\R$ defined as 
\begin{align}
	\objalt(x) \defeq \ex{\sobjalt(x ;\sample)}[\sample\sim\prob] \, .
\end{align}
Our generic assumptions stand as follows.

\begin{assumption}
\label{asm:sgd}
$\prob$ is a probability distribution on the measure space $\samples$, and the function $\sobjalt\from\R^d\times\samples\to\R$ satisfies the following conditions:
\begin{enumerate}
[left=\parindent,label=\upshape(\itshape\alph*\hspace*{.5pt}\upshape)]
\item
\label[noref]{asm:sgd-smooth}
\emph{Smoothness:} $\sobjalt(\cdot;\sample)$ is $C^1$ with $\smooth$-Lipschitz gradient, \ie there is $\smooth\geq0$ such that
\begin{align}
\norm{\derpoint\sobjalt(\point ;\sample) - \derpoint\sobjalt(\pointalt ;\sample)}
	\leq \smooth \norm{\point - \pointalt}
\end{align}
for all $\point,\pointalt\in\R^q$, and all $\sample\in\samples$.
\item
\label[noref]{asm:sgd-strong}
\emph{Strong convexity:} 
there is $\opt\in\R^q$ and $\strong>0$ such that $\inner{\point - \opt}{ \ex{\derpoint\sobjalt(\point ;\sample)} } \geq \strong \sqnorm{\point-\opt}$ for all $\point\in\R^q$.
    \end{enumerate}
    \begin{enumerate}[resume,left=\parindent,label=\upshape(\itshape\alph*\hspace*{.5pt}\upshape)]
\item \label[noref]{asm:sgd-noise}
\emph{Variance control:} there is $0 \leq \noisedev < +\infty$ such that
    $ \ex{\sqnorm{\sgradalt(\opt ;\sample) } } \leq \noisevar $.
    \end{enumerate}

\end{assumption}

We remark that under \cref{asm:obj,asm:noise}, \cref{asm:sgd} is satisfied for both problems \eqref{eq:opt} and $\eqref{eq:optDer}$. We will consider an \emph{inexact} SGD recursion of the form
\begin{align}\label{eq:sgdnpoise}
            \next = \curr - \curr[\step] \left( \sgradalt( \curr ; \next[\sample]) + \next[\error] \right)
\end{align}
where we will need the following assumption on noise and errors.

\begin{assumption}
\label{asm:noise2}
The observed noise sequence $\seqinf{\sample}{\run}$ is independent and identically distributed with common distribution $\prob$ on $\samples$. Denote by $(\curr[\filter])_{\run \in \N}$ the natural filtration (\ie for all $\run$,  $\curr[\filter]$ is the $\sigma$-algebra generated by $\beforeinit[\sample], \dots, \curr[\sample]$), the errors $\seqinfin{\curr[\error]}{\run}$ form a sequence of  $\seqinfin{\curr[\filter]}{\run}$-adapted random variables such that $\mathbb{E}[\norm{\next[\error]}^2] \leq \curr[\Ebound]^2$ where $\seqinfin{\curr[\Ebound]}{\run}$ is a deterministic non-increasing sequence. 
\end{assumption}

The following reduces the analysis of inexact SGD sequences to the study of a deterministic recursion, its proof is given in \Cref{sec:proofMainText}.

\begin{proposition}[Convergence of inexact SGD]\label{prop:cvsgdnoise}
    Let \cref{asm:sgd} and \Cref{asm:noise2} hold. Consider the iterates in \eqref{eq:sgdnpoise} where $\seqinfin{\curr[\step]}{\run}$ is a positive, non-increasing, non-summable sequence with  $\beforeinit[\step] \leq   \frac{\strong}{4\smooth^2}$. Setting $\curr[D] =  \sqrt{\ex{\sqnorm{ \curr - \opt }}}$, we have for all $k \in \N$:
    \begin{align}
        \next[D]^2 \leq \left(1 - \strong \curr[\step] \right )\curr[D]^2  + 2\curr[\step]^2(\curr[\Ebound]^2 + 2 \noisevar) + 2\curr[\step]\curr[\Ebound]  \curr[D].
        \label{eq:mainRecursion-main}
    \end{align}
\end{proposition}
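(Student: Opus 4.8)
The plan is to carry out the standard one-step energy estimate for $\curr[D]^2 = \ex{\sqnorm{\curr - \opt}}$ and then invoke the step-size ceiling to convert it into the stated contraction. Starting from \eqref{eq:sgdnpoise}, I would write $\next - \opt = (\curr - \opt) - \curr[\step]\bigl(\sgradalt(\curr;\next[\sample]) + \next[\error]\bigr)$, expand the squared Euclidean norm, and take total expectations; this leaves three contributions to control: the previous energy $\curr[D]^2$, the cross term $-2\curr[\step]\,\ex{\inner{\curr - \opt}{\sgradalt(\curr;\next[\sample]) + \next[\error]}}$, and the quadratic term $\curr[\step]^2\,\ex{\sqnorm{\sgradalt(\curr;\next[\sample]) + \next[\error]}}$.

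For the cross term I would split off the gradient and the error. For the gradient part, conditioning on $\curr[\filter]$ and using that $\next[\sample]$ is independent of $\curr[\filter]$ while $\curr$ is $\curr[\filter]$-measurable shows that $\condex{\sgradalt(\curr;\next[\sample])}[\curr[\filter]]$ equals the map $y \mapsto \ex{\sgradalt(y;\sample)}[\sample\sim\prob]$ evaluated at $y = \curr$ (integrability being automatic, since \cref{asm:sgd}\ref{asm:sgd-noise} and the $\smooth$-Lipschitz gradient force $\sgradalt(y;\cdot)\in\Lspace$ for every $y$); hence \cref{asm:sgd}\ref{asm:sgd-strong} together with the tower rule yields $-2\curr[\step]\,\ex{\inner{\curr - \opt}{\sgradalt(\curr;\next[\sample])}} \le -2\strong\curr[\step]\curr[D]^2$. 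For the error part, the crucial point is that $\next[\error]$ is \emph{not} assumed centered and may be correlated with $\next[\sample]$, so no cancellation is available; I would simply apply Cauchy--Schwarz twice, $\ex{\abs{\inner{\curr - \opt}{\next[\error]}}} \le \ex{\norm{\curr - \opt}\,\norm{\next[\error]}} \le \curr[D]\sqrt{\ex{\sqnorm{\next[\error]}}} \le \curr[D]\curr[\Ebound]$ by \cref{asm:noise2}, so this piece contributes at most $2\curr[\step]\curr[\Ebound]\curr[D]$.

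For the quadratic term I would use $\sqnorm{a + b} \le 2\sqnorm{a} + 2\sqnorm{b}$ to decouple gradient and error, bound $\ex{\sqnorm{\next[\error]}} \le \curr[\Ebound]^2$, and control the stochastic gradient by comparing it to its value at $\opt$: pointwise in $\sample$, $\sqnorm{\sgradalt(\curr;\next[\sample])} \le 2\sqnorm{\sgradalt(\curr;\next[\sample]) - \sgradalt(\opt;\next[\sample])} + 2\sqnorm{\sgradalt(\opt;\next[\sample])} \le 2\smooth^2\sqnorm{\curr - \opt} + 2\sqnorm{\sgradalt(\opt;\next[\sample])}$ by \cref{asm:sgd}\ref{asm:sgd-smooth}, and taking expectations with \cref{asm:sgd}\ref{asm:sgd-noise} gives $\ex{\sqnorm{\sgradalt(\curr;\next[\sample])}} \le 2\smooth^2\curr[D]^2 + 2\noisevar$. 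Assembling the three bounds yields
\begin{align*}
  \next[D]^2 \le \bigl(1 - 2\strong\curr[\step] + 4\smooth^2\curr[\step]^2\bigr)\curr[D]^2 + 2\curr[\step]^2\bigl(\curr[\Ebound]^2 + 2\noisevar\bigr) + 2\curr[\step]\curr[\Ebound]\curr[D],
\end{align*}
and since $(\curr[\step])$ is non-increasing with $\beforeinit[\step] \le \frac{\strong}{4\smooth^2}$ we have $4\smooth^2\curr[\step]^2 \le \strong\curr[\step]$, so the bracketed factor is at most $1 - \strong\curr[\step]$ and \eqref{eq:mainRecursion-main} follows.

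The point that needs the most care — rather than a genuine obstacle — is the treatment of $\next[\error]$: because it may be non-centered and need not be independent of $\next[\sample]$, one cannot appeal to any martingale cancellation, and the surviving linear-in-$\curr[\Ebound]$ term $2\curr[\step]\curr[\Ebound]\curr[D]$ is precisely what makes the recursion non-standard and forces the separate, deterministic analysis of \eqref{eq:mainRecursion-main} carried out afterwards. One must also keep careful track of which quantities are $\curr[\filter]$-measurable (only $\curr$, not $\next[\error]$) when conditioning in the gradient cross term, and observe that the step-size ceiling $\frac{\strong}{4\smooth^2}$ is exactly what is required to absorb the $4\smooth^2\curr[\step]^2\curr[D]^2$ term — produced by the second-moment bound on the stochastic gradient — into the contraction factor $1 - \strong\curr[\step]$.
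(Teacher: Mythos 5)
Your proposal is correct and mirrors the paper's own argument essentially step for step: the same one-step expansion, the same conditional-expectation treatment of the gradient cross term via \cref{asm:sgd}\ref{asm:sgd-strong}, the same Cauchy--Schwarz/Jensen bound $\ex{\norm{\curr-\opt}\norm{\next[\error]}}\le \curr[D]\curr[\Ebound]$ for the non-centered error, the same $\sqnorm{a+b}\le 2\sqnorm{a}+2\sqnorm{b}$ plus variance-at-the-optimum bound giving $2\smooth^2\curr[D]^2+2\noisevar$, and the same absorption of $4\smooth^2\curr[\step]^2$ into $\strong\curr[\step]$ via the step-size ceiling. No gaps; your remarks on measurability and on why no martingale cancellation is available for $\next[\error]$ are exactly the points the paper's proof also relies on.
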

Studying the deterministic recursion \eqref{eq:mainRecursion-main} leads to the following results by relying on different helper lemmas laid out in \Cref{sec:appendixTechnicalLemmas}:

\begin{tabular}{c|c|c|c||c}
 Lemma & Stepsizes & Errors & Noise & Result   \\
 \hline
 \Cref{lem:technicalLemmaSequence} &  $\curr[\step] \to \step \geq 0$ &  $\curr[B] \to B \propto \sqrt{\step}$ & $\noisevar\geq0$ &  ${\lim\sup}_{\run \to \infty} \quad \curr[D] \propto \sqrt{\step}$  \vphantom{$ O\left(\frac{\log(k+8\kappa^2)}{k + 8 \kappa^2}\right)$} \\
 \hline
  \Cref{lem:convergenceOneOverKx} & $\curr[\step]=\frac{2\mu}{\mu^2 \run + 8L^2}$ &  $B_k = 0$ & $\noisevar\geq0$  &  $D_k^2 = O\left(\frac{\log(k+8\kappa^2)}{k + 8 \kappa^2}\right)$ \\
  \hline
  \Cref{lem:convergenceOneOverKDerx} & $\curr[\step]=\frac{2\mu}{\mu^2 \run + 8L^2}$ &  $B_k^2 =  O\left(\frac{\log(k+8\kappa^2)}{k + 8 \kappa^2}\right)$ & $\noisevar\geq0$  &  $D_k^2 = O\left(\frac{\log(k+8\kappa^2)^2}{k + 8 \kappa^2}\right)$ \\
  \hline
   \Cref{lem:convergenceLinearDerx} & $\curr[\step]=\step <  \frac{1}{2\mu}$ &  $B_k^2 =   O\left( (1-  \frac{\mu\step}{2})^k \right)$ & $\noisevar = 0$  &  $D_k^2 = O\left( k(1-  \frac{\mu\step}{2})^k \right)$ \vphantom{$ O\left(\frac{\log(k+8\kappa^2)}{k + 8 \kappa^2}\right)$} \\
   \hline
\end{tabular}





These results will be used to prove \Cref{th:mainResultConvergenceDerivatives} in the coming section.
They are of independent interest regarding the convergence analysis of inexact SGD sequences. The first lemma allows to prove the first point in \Cref{th:mainResultConvergenceDerivatives}, the second and third lemmas allow to treat the second point, and the last lemma allows to treat the interpolation regime in the third point. See \Cref{sec:appendixTechnicalLemmas} for detailed statements. 

\subsection{Proof of the main result}
\label{sec:proofMainResult}
We first show that \Cref{prop:cvsgdnoise} can be applied to the recursion \eqref{eq:derRecrusion} in relation to \eqref{eq:optDer} and then explicit its consequences using the lemmas of \Cref{sec:appendixTechnicalLemmas}.

\begin{proof}[Proof of \Cref{th:mainResultConvergenceDerivatives}]
    Following \eqref{eq:derRecrusion2}, we have that $(\derparam \currP)_{k \in \N}$ is an inexact SGD sequence for problem \eqref{eq:optDer} as in \eqref{eq:sgdnpoise}, with an error term of the form
    \begin{align}
        \next[\error] =\;& \left( \dderpoint\sobj(\optP,\param;\next[\sample]) 
     - \dderpoint\sobj(\currP[\point],\param;\next[\sample])\right)   \derparam \currP \\
     &+  \left(\dderpointparam\sobj(\optP,\param;\next[\sample]) -\dderpointparam\sobj(\currP[\point],\param;\next[\sample]) \right). 
    \end{align}
    Under \Cref{asm:obj} and \Cref{asm:noise}, Problem \eqref{eq:optDer} satisfies \Cref{asm:sgd}, and we have the same values for $L$, $\mu$ and $\sigma$ for both problems \eqref{eq:opt} and \eqref{eq:optDer}. Furthermore, the error term $\next[\error]$ satisfies \Cref{asm:noise2}, and, thanks to \Cref{lem:derivativeBounded} and \Cref{asm:obj} on Lipschitz continuity of the Hessian of $\sobj$, we have almost surely 
    \begin{align}
        \label{eq:boundErrorDerivatives}
        \norm{\next[\error]} \leq M\norm{\currP - \optP }(1 +  \max\{\norm{\derparam x_0(\param)}, \sqrt{m} \smooth' /\mu\} ). 
    \end{align}
    The various bounds are obtained by considering different regimes. We first estimate a bound on $\ex{\sqnorm{\currP - \optP}}$ using \Cref{prop:cvsgdnoise} with $B_k = 0$ for all $k$. This allows to obtain an estimate on $\ex{\sqnorm{\next[\error]}}$ using \eqref{eq:boundErrorDerivatives}. We conclude for the derivative sequence by applying \Cref{prop:cvsgdnoise} with its different corollaries. We treat all these results separately.

    \textbf{General estimate.}
    From \Cref{prop:cvsgdnoise} with $B_k = 0$, we obtain, by considering $g(\point,\sample) = f(\point,\param;\sample)$ and \Cref{lem:technicalLemmaSequence} that $\lim\sup_{k \to \infty} \ex{\sqnorm{\currP - \optP}} \leq \frac{4 \sigma^2\eta}{\mu}$. 
    For the derivative sequence, combining this first estimate with \eqref{eq:boundErrorDerivatives}, we can consider a decreasing sequence of mean squared upper bounds $(B_k)_{k \in \N}$, such that 
    \begin{align} 
        \lim_{k \to \infty} B_k = B \defeq 2 \sigma \sqrt{\frac{\step}{\mu}} M(1 +  \max\{\norm{\derparam x_0(\param)}, \sqrt{m} \smooth' /\mu\} ).
    \end{align} 
    The upper bound given by \Cref{prop:cvsgdnoise} and \Cref{lem:technicalLemmaSequence} is of the form
    \begin{align}
        \frac{\sqrt{ \Ebound^2 + 2\strong\step(\Ebound^2 + 2 \noisevar)} + \Ebound}{\strong} \leq \frac{\sqrt{ \frac{3}{2}\Ebound^2 + 4\strong\step\noisevar} + \Ebound}{\strong} \leq 2\sigma \sqrt{\frac{\eta}{\mu}}  + \frac{3\Ebound}{\strong},
    \end{align}
    which corresponds to the claimed bound.

    \textbf{Convergence rate.}  From \Cref{prop:cvsgdnoise} with $B_k = 0$, we obtain, by considering $g(\point,\sample) = f(\point,\param;\sample)$ and \Cref{lem:convergenceOneOverKx} that $\ex{\sqnorm{\currP - \optP}} = O\left( \frac{\log(k + 8 \kappa^2)}{k+8\kappa^2}\right)$ as given in \Cref{lem:convergenceOneOverKx}. As a consequence, combining this first estimate with \eqref{eq:boundErrorDerivatives}, we may set $B_k = O\left( \frac{\log(k + 8 \kappa^2)}{k+8\kappa^2}\right)$ and the result follows from \Cref{lem:convergenceOneOverKDerx}.

    \textbf{Interpolation regime.} Setting $\rho = 1 - \frac{\mu\eta}{2} = 1 - \frac{1}{8 \kappa^2}$, for $\noisevar = 0$ and $\curr[\Ebound] = 0$ for all $k \in \N$, it is clear from \eqref{eq:mainRecursion-main} that $\ex{\sqnorm{\currP - \optP}} \leq \sqnorm{\beforeinit[\point](\param) - \optP} \rho^k$ for all $k \in \N$. Using \eqref{eq:boundErrorDerivatives}, we may choose $\curr[\Ebound] = O(\rho^k)$. Plugging this estimate in \eqref{eq:mainRecursion-main}, the result is then given by \Cref{lem:convergenceLinearDerx}.
\end{proof}

\section{Numerical illustration}
\label{sec:numerics}

In this section, we illustrate the results of \cref{th:mainResultConvergenceDerivatives} by examining the numerical behavior of the iterates and their derivatives under various settings. Specifically, we provide insights into the behavior of classical regularized methods, such as Ridge regression, logistic regression, Huber regression. Furthermore, we explore potential extensions to the nonsmooth case by also considering the Hinge loss.
All the experiments are performed for the empirical risk minimization structure, \ie the randomness $\sample$ is drawn from the uniform distribution over $\{1, \dots, m\}$.
All the experiments were performed in \texttt{jax}~\citep{jax2018github} on a MacBook Pro M3 Max.

\textbf{Ordinary least squares.}
We consider a simple linear regression problem solved by ordinary least-squares as:
\[
    \optP = 
    \arg\min\nolimits_{\point\in\Rd}\;
    	\obj(\point,\param) \defeq 
     \frac{1}{2m} \sum_{\xi=1}^m (a_\xi^\top x - b(\theta)_\xi)^2,
\]
The data $A = (a_\xi) \in \mathbb{R}^{m \times d}$ here is a random matrix with $d < m$. The finite sum structure naturally suggests a stochastic gradient decomposition as in \eqref{eq:SGD}, by choosing $\xi$ uniformly in $\{1,\ldots,m\}$ with replacement. We consider three generative models for the function $b$:
\begin{enumerate}
    \item \emph{Standard setting:} $\param \in \mathbb{R}^m$, and we have $b$ is the identity on $\R^m$. In this case, our theory  corresponds to the differentiation of the least squares solution seen as a function of the output observations.
    \item \emph{Simple interpolation setting:} The setting is the same as the standard one, except that we consider a specific value of $\theta = A \zeta$ for some $\zeta \in \mathbb{R}^d$. In this case, we do \emph{not} differentiate through the linear relation $\theta = A\zeta$, the function $b$ remains the identity, we simply evaluate at a specific point which corresponds to data interpolation. We call this simple interpolation, because it corresponds to $\sigma = 0$ for the sequence \eqref{eq:SGD}, but not for \eqref{eq:derRecrusion}.
     \item \emph{Double interpolation setting:} The parameter variable $\param$ is in $\mathbb{R}^d $ and we set $ b\colon \param \to A \param$. Here this corresponds to an interpolation regime which is uniform in $\theta$. We call this double interpolation because it corresponds to $\sigma = 0$ for both sequences \eqref{eq:SGD} and \eqref{eq:derRecrusion}.
\end{enumerate}
\begin{figure}[ht]
  \centering
  \includegraphics[width=\textwidth]{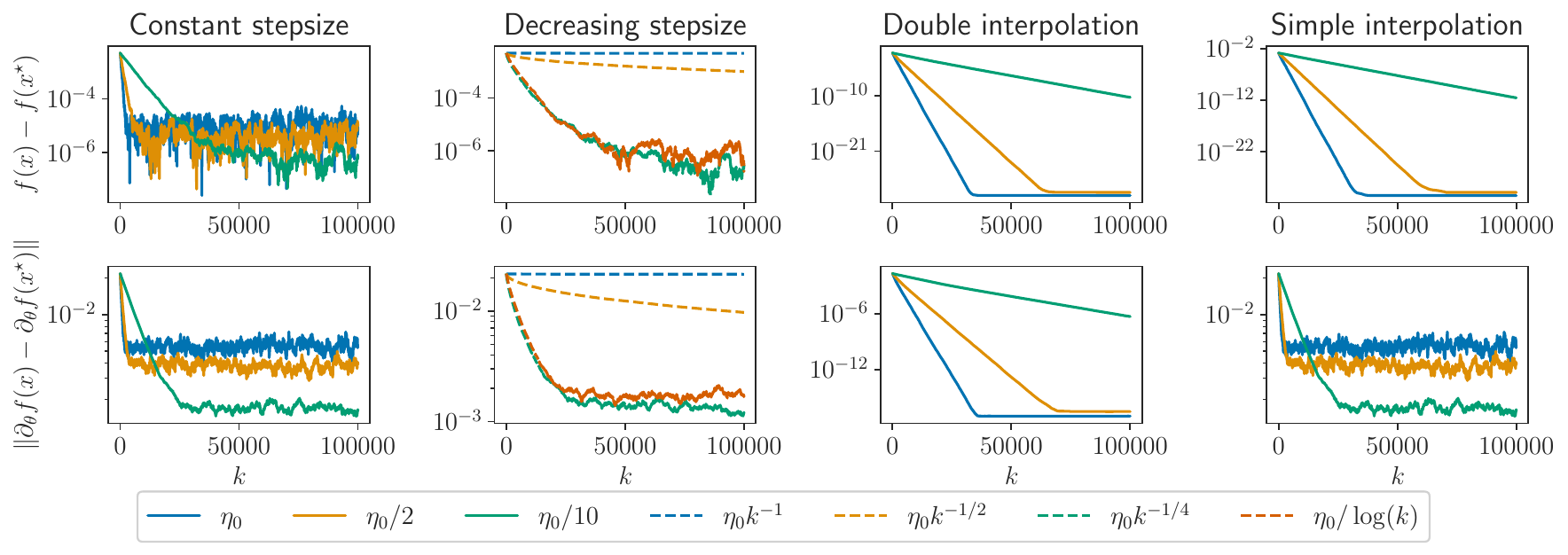}
  \caption{Numerical behavior of SGD iterates and their derivatives (Jacobians) in a linear regression problem solved by ordinary least squares. The plots depict the convergence of the suboptimality $\sobj(x_k(\theta)) - \sobj(x^\star(\theta))$ and the Frobenius norm of the derivative error $\| \partial_\theta x_k(\theta) - \partial_\theta x^\star(\theta)\|_F$ across different experimental settings: constant step-size (first column), decreasing step-size (second column), double interpolation (third column), and simple interpolation (fourth column). The experiments utilize varying step-size strategies to illustrate general estimates, sublinear rates, and the impacts of interpolation regimes, validating theoretical predictions of \cref{th:mainResultConvergenceDerivatives}.}
  \label{fig:lstsq}
\end{figure}

Note the the difference between setting 2. and 3. are that we are \emph{not} differentiating through the linear map $A$ in setting 2.
Furthermore \cref{asm:obj} and \cref{asm:noise} are satisfied for these three settings.
\Cref{fig:lstsq} illustrates the behavior of~\eqref{eq:SGD} and~\eqref{eq:derRecrusion}.
More precisely, we monitor the convergence of the suboptimality $\sobj(x_k(\theta)) - \sobj(x^\star(\theta))$ and of the derivatives error measured in Frobenius norm $\| \partial_\theta x_k(\theta) - \partial_\theta x^\star(\theta)\|_F$.
We consider various step size regimes and set $\eta_0 = \frac{\mu}{4 L^2}$ for all experiments.
This allow us to clearly identify the three regimes of~\Cref{th:mainResultConvergenceDerivatives}:
\begin{itemize}
    \item \emph{Constant stepsize:} in setting 1., employing a constant step-size, we observe convergence of both the iterates (consistent with classical SGD theory) and their derivatives to a neighborhood of the solution whose diameter decreases with the step size.
    \item \emph{Decreasing stepsize:} in setting 1., employing a step-size proportional to $1/k$, we observe a sublinear decay of both the iterates and their derivatives. The convergence is difficult to observe since the decay leads to very small updates.
    \item \emph{Double Interpolation regime:} in setting 3., employing a constant step-size, we observe both iterates and derivatives linear decays.
    \item \emph{Simple Interpolation regime:} in setting 2., \cref{asm:noise}\ref{asm:noise-growth} is satisfied with $\sigma=0$ only for the iterates, but not for the derivatives, we observe linear convergence of the iterates, but the derivatives converge to a neighborhood of the solution as in the setting 1.
\end{itemize}

\textbf{Ridge, Logistic, Huber and SVM regression.} In addition to the previous illustration of \Cref{th:mainResultConvergenceDerivatives}, we provide numerical experiments for constant learning rate for four different models: ridge regression, logistic regression, Huber regression and Support Vector Machines (SVM) regression.
All of them are written as
\[
    \optP = 
    \arg\min\nolimits_{\point\in\Rd}\;
    	\obj(\point,\param) \defeq 
     \frac{1}{m} \sum_{\xi=1}^m f(x, \theta; \xi) + \mu \| x \|_2^2 ,
\]
where
$f(x, \theta; \xi) = \frac{1}{2} (a_\xi^\top w - \theta_\xi)^2$ for ridge regression, $f(x, \theta; \xi) = \log(1 + \exp(-\theta_\xi a_\xi^\top x))$ for logistic regression, 
$$
f(x, \theta; \xi) = 
\begin{cases} 
\frac{1}{2}(\theta_\xi - a_\xi^\top x)^2 & \text{if } |\theta_\xi - a_\xi^\top x| \leq \delta \\
\delta \left( |\theta_\xi - a_\xi^\top x| - \frac{1}{2}\delta \right) & \text{otherwise,}
\end{cases}
$$
for Huber regression for some $\delta > 0$ (here $\delta = 0.1$), and $f(x, \theta; \xi) = \max(0, 1 - \theta_\xi a_\xi^\top x)$ for SVM regression (hinge loss).
In all cases, the finite sum structure naturally suggests a stochastic gradient decomposition as in \eqref{eq:SGD}, by choosing $\xi$ uniformly in $\{1,\ldots,m\}$ with replacement
All experiences are performed with $m, d = 100, 10$ and $\mu = 0.05$.
In \Cref{fig:others}, we show the convergence of the objective function and the derivatives with respect to $\theta$ for the four models with a constant learning rate. 
Note that the SVM loss is not differentiable. We refer to~\citep{bolte2022automatic} for a formal treatment of nonsmooth iterative differentiation, but one could expect similar results for conservative Jacobians.

\begin{figure}[t]
  \centering
  \includegraphics[width=\textwidth]{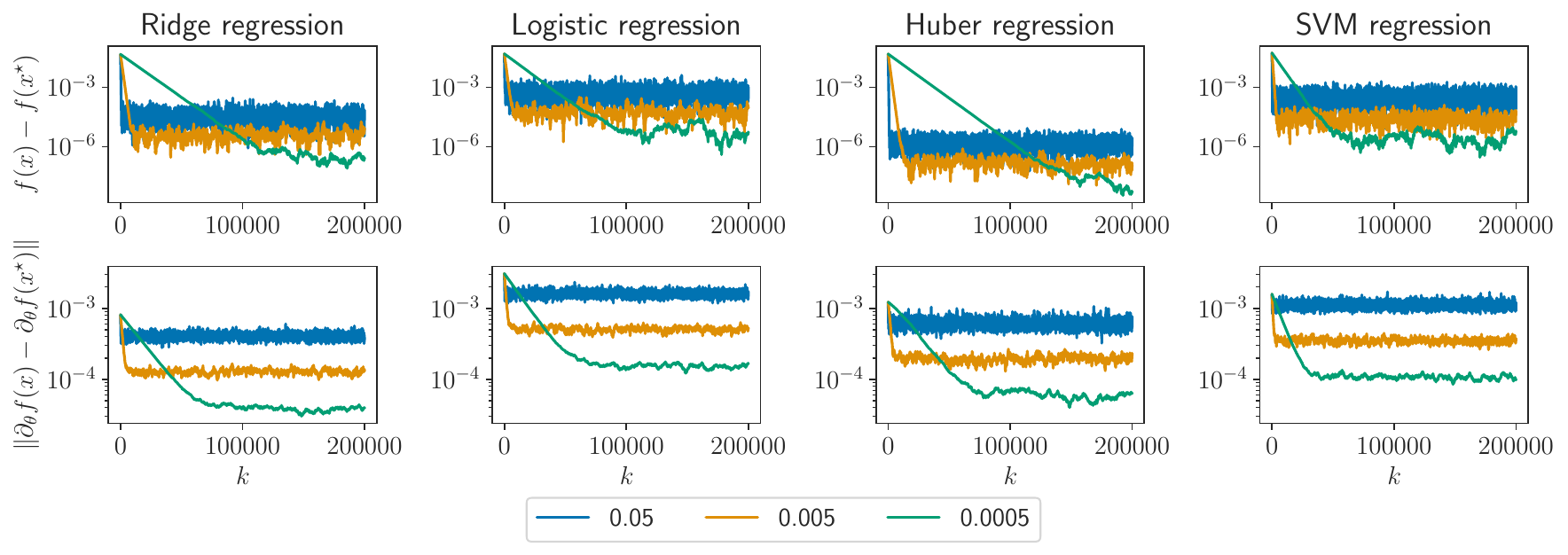}
  \caption{Numerical behavior of the objective function and its derivatives with respect to $\theta$ for ridge regression, logistic regression, Huber regression, and Support Vector Machines (SVM) regression using a constant learning rate. We report the suboptimality $\sobj(x_k(\theta)) - \sobj(x^\star(\theta))$ for the SGD iterates, along \textit{(bottom)} with the norm of derivatives errors $\| \partial_\theta x_k(\theta) - \partial_\theta x^\star(\theta)\|_F$ for different constant step-size. Each line corresponds to a different step-size.}
  \label{fig:others}
\end{figure}

\textbf{Experiments on real data.}
We display in Figure~\ref{fig:numeric-icjcnn} the behaviour of SGD iterates and their derivatives for regularized logistic regression problem on \texttt{ijcnn1}.

\begin{figure}[h]
  \centering
  \includegraphics[width=.7\textwidth]{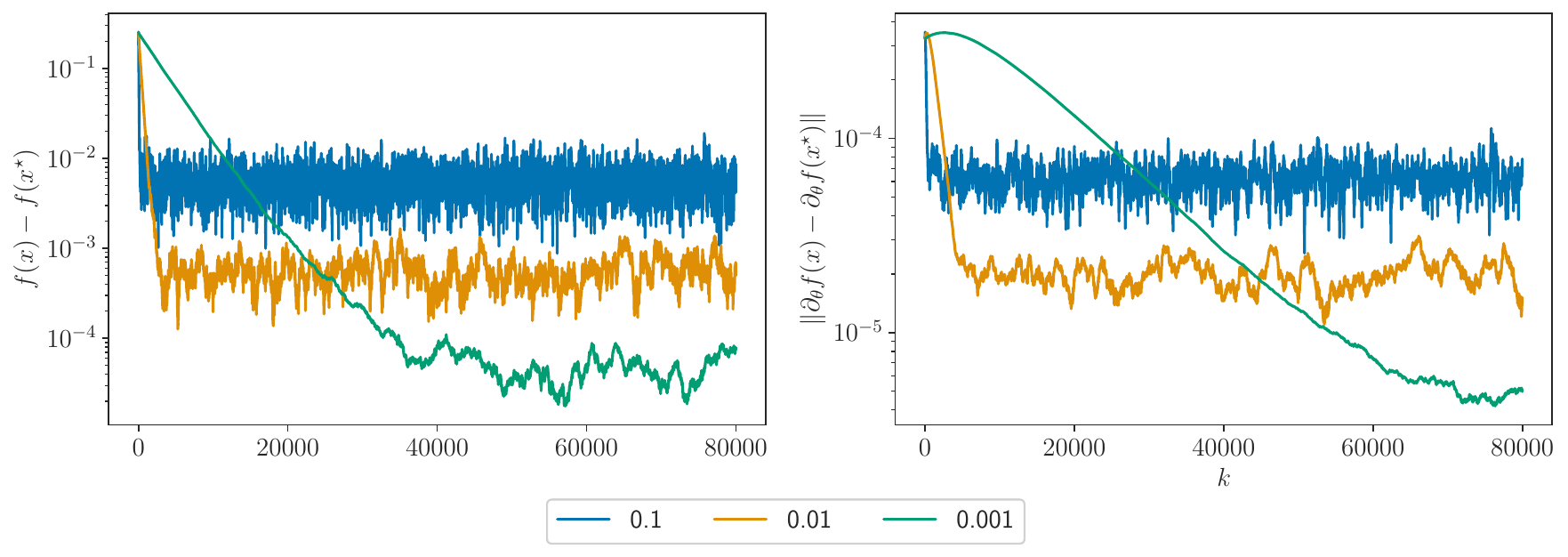}
  \caption{Numerical behavior of SGD iterates and their derivatives (Jacobians) for regularized logistic regression problem.
  The plots depict the convergence of the suboptimality $\sobj(x_k(\theta)) - \sobj(x^\star(\theta))$ (left) and the derivative error $\| \partial_\theta x_k(\theta) - \partial_\theta x^\star(\theta) \|$ (right) for different constant step size.
  The dataset used is \texttt{ijcnn1} from \texttt{libsvm} with 49,990 observations and 22 features.
  The observations are qualitatively identical to our synthetic experiments.}
  \label{fig:numeric-icjcnn}
\end{figure}

\section{Conclusion}
In conclusion, our study of stochastic optimization problems where the objective depends on a parameter reveals insights into the behavior of SGD derivatives.
We demonstrated that these derivatives follow an inexact SGD recursion, converging to the solution mapping's derivative under strong convexity, with constant step-sizes leading to stabilization and vanishing step-sizes achieving $O(\log(k)^2 / k)$ rates.
Future research could refine the analysis by comparing stochastic implicit and iterative differentiation, develop a minibatch version, and explore outcomes in non-strongly convex or nonsmooth settings.
Additionally, the feasibility of stochastic iterative differentiation warrants further investigation, given its potential benefits and challenges in such scenarios.
\newpage

\section*{Acknowledgements}
The authors acknowledge the support of ANR MAD ANR-24-CE23-1529. Edouard P. and Franck I. are supported by the AI Interdisciplinary Institute ANITI (ANR-19-PI3A-0004) and ANR REGULIA. Edouard P. acknowledges the support of the Air Force Office of Scientific Research FA8655-22-1-7012, and TSE partnership. Samuel V. is supported by the AI Interdisciplinary Institute 3IA Côte d'Azur (ANR-19-P3IA-0002) and ANR GraVa ANR-18-CE40-0005.


\bibliography{references}

\begin{thebibliography}{66}
\providecommand{\natexlab}[1]{#1}
\providecommand{\url}[1]{\texttt{#1}}
\expandafter\ifx\csname urlstyle\endcsname\relax
  \providecommand{\doi}[1]{doi: #1}\else
  \providecommand{\doi}{doi: \begingroup \urlstyle{rm}\Url}\fi

\bibitem[Abadi et~al.(2015)Abadi, Agarwal, Barham, Brevdo, Chen, Citro, Corrado, Davis, Dean, Devin, Ghemawat, Goodfellow, Harp, Irving, Isard, Jia, Jozefowicz, Kaiser, Kudlur, Levenberg, Man\'{e}, Monga, Moore, Murray, Olah, Schuster, Shlens, Steiner, Sutskever, Talwar, Tucker, Vanhoucke, Vasudevan, Vi\'{e}gas, Vinyals, Warden, Wattenberg, Wicke, Yu, and Zheng]{tensorflow2015-whitepaper}
M.~Abadi, A.~Agarwal, P.~Barham, E.~Brevdo, Z.~Chen, C.~Citro, G.~S. Corrado, A.~Davis, J.~Dean, M.~Devin, S.~Ghemawat, I.~Goodfellow, A.~Harp, G.~Irving, M.~Isard, Y.~Jia, R.~Jozefowicz, L.~Kaiser, M.~Kudlur, J.~Levenberg, D.~Man\'{e}, R.~Monga, S.~Moore, D.~Murray, C.~Olah, M.~Schuster, J.~Shlens, B.~Steiner, I.~Sutskever, K.~Talwar, P.~Tucker, V.~Vanhoucke, V.~Vasudevan, F.~Vi\'{e}gas, O.~Vinyals, P.~Warden, M.~Wattenberg, M.~Wicke, Y.~Yu, and X.~Zheng.
\newblock {TensorFlow}: Large-scale machine learning on heterogeneous systems, 2015.
\newblock URL \url{https://www.tensorflow.org/}.
\newblock Software available from tensorflow.org.

\bibitem[Ablin et~al.(2020)Ablin, Peyr{\'e}, and Moreau]{ablin2020super}
P.~Ablin, G.~Peyr{\'e}, and T.~Moreau.
\newblock Super-efficiency of automatic differentiation for functions defined as a minimum.
\newblock In \emph{ICML}, volume 119, pages 32--41. PMLR, 2020.

\bibitem[Agrawal et~al.(2019)Agrawal, Amos, Barratt, Boyd, Diamond, and Kolter]{agrawal2019differentiable}
A.~Agrawal, B.~Amos, S.~Barratt, S.~Boyd, S.~Diamond, and J.~Z. Kolter.
\newblock Differentiable convex optimization layers.
\newblock In \emph{NeurIPS}, volume~32, 2019.

\bibitem[Amos and Kolter(2017)]{amos2017optnet}
B.~Amos and J.~Z. Kolter.
\newblock {O}pt{N}et: Differentiable optimization as a layer in neural networks.
\newblock In \emph{ICML}, 2017.

\bibitem[Arbel and Mairal(2021)]{arbel2021amortized}
M.~Arbel and J.~Mairal.
\newblock Amortized implicit differentiation for stochastic bilevel optimization.
\newblock In \emph{ICLR}, 2021.

\bibitem[Bai et~al.(2019)Bai, Kolter, and Koltun]{bai2019deep}
S.~Bai, J.~Z. Kolter, and V.~Koltun.
\newblock Deep equilibrium models.
\newblock \emph{NeurIPS}, 32, 2019.

\bibitem[Baydin et~al.(2018)Baydin, Pearlmutter, Radul, and Siskind]{baydin2018automatic}
A.~G. Baydin, B.~A. Pearlmutter, A.~A. Radul, and J.~M. Siskind.
\newblock Automatic differentiation in machine learning: a survey.
\newblock \emph{J. Mach. Learn. Res.}, 18\penalty0 (153):\penalty0 1--43, 2018.

\bibitem[Beck(1994)]{beck1994automatic}
T.~Beck.
\newblock Automatic differentiation of iterative processes.
\newblock \emph{J. Comput. Appl. Math.}, 50\penalty0 (1-3):\penalty0 109--118, 1994.

\bibitem[Bertrand et~al.(2020)Bertrand, Klopfenstein, Blondel, Vaiter, Gramfort, and Salmon]{bertrand2020implicit}
Q.~Bertrand, Q.~Klopfenstein, M.~Blondel, S.~Vaiter, A.~Gramfort, and J.~Salmon.
\newblock Implicit differentiation of lasso-type models for hyperparameter optimization.
\newblock In \emph{ICML}, pages 810--821. PMLR, 2020.

\bibitem[Blondel et~al.(2022)Blondel, Berthet, Cuturi, Frostig, Hoyer, Llinares-L{\'o}pez, Pedregosa, and Vert]{blondel2022efficient}
M.~Blondel, Q.~Berthet, M.~Cuturi, R.~Frostig, S.~Hoyer, F.~Llinares-L{\'o}pez, F.~Pedregosa, and J.-P. Vert.
\newblock Efficient and modular implicit differentiation.
\newblock \emph{NeurIPS}, 35:\penalty0 5230--5242, 2022.

\bibitem[Bogensperger et~al.(2022)Bogensperger, Chambolle, and Pock]{bogensperger2022convergence}
L.~Bogensperger, A.~Chambolle, and T.~Pock.
\newblock Convergence of a piggyback-style method for the differentiation of solutions of standard saddle-point problems.
\newblock \emph{SIAM J. Math. Data Sci.}, 4\penalty0 (3):\penalty0 1003--1030, 2022.

\bibitem[Bolte et~al.(2022)Bolte, Pauwels, and Vaiter]{bolte2022automatic}
J.~Bolte, E.~Pauwels, and S.~Vaiter.
\newblock Automatic differentiation of nonsmooth iterative algorithms.
\newblock \emph{NeurIPS}, 35:\penalty0 26404--26417, 2022.

\bibitem[Bolte et~al.(2023)Bolte, Pauwels, and Vaiter]{bolte2023one}
J.~Bolte, E.~Pauwels, and S.~Vaiter.
\newblock One-step differentiation of iterative algorithms.
\newblock \emph{NeurIPS}, 36, 2023.

\bibitem[Borkar(2009)]{borkar2009stochastic}
V.~S. Borkar.
\newblock \emph{Stochastic approximation: a dynamical systems viewpoint}, volume~48.
\newblock Springer, 2009.

\bibitem[Bottou et~al.(2018)Bottou, Curtis, and Nocedal]{bottou2018optimization}
L.~Bottou, F.~E. Curtis, and J.~Nocedal.
\newblock Optimization methods for large-scale machine learning.
\newblock \emph{SIAM Rev.}, 60\penalty0 (2):\penalty0 223--311, 2018.

\bibitem[Bradbury et~al.(2018)Bradbury, Frostig, Hawkins, Johnson, Leary, Maclaurin, Necula, Paszke, Vander{P}las, Wanderman-{M}ilne, and Zhang]{jax2018github}
J.~Bradbury, R.~Frostig, P.~Hawkins, M.~J. Johnson, C.~Leary, D.~Maclaurin, G.~Necula, A.~Paszke, J.~Vander{P}las, S.~Wanderman-{M}ilne, and Q.~Zhang.
\newblock {JAX}: composable transformations of {P}ython+{N}um{P}y programs, 2018.
\newblock URL \url{http://github.com/google/jax}.

\bibitem[Chambolle and Pock(2021)]{chambolle2021learning}
A.~Chambolle and T.~Pock.
\newblock Learning consistent discretizations of the total variation.
\newblock \emph{SIAM J. Imag. Sci.}, 14\penalty0 (2):\penalty0 778--813, 2021.

\bibitem[Chen et~al.(1987)Chen, Guo, and Gao]{chen1987convergence}
H.-F. Chen, L.~Guo, and A.-J. Gao.
\newblock Convergence and robustness of the robbins-monro algorithm truncated at randomly varying bounds.
\newblock \emph{Stoch. Process. their Appl.}, 27:\penalty0 217--231, 1987.

\bibitem[Chen et~al.(2021)Chen, Sun, and Yin]{chen2021closing}
T.~Chen, Y.~Sun, and W.~Yin.
\newblock Closing the gap: Tighter analysis of alternating stochastic gradient methods for bilevel problems.
\newblock \emph{NeurIPS}, 34:\penalty0 25294--25307, 2021.

\bibitem[Christianson(1994)]{christianson1994reverse}
B.~Christianson.
\newblock Reverse accumulation and attractive fixed points.
\newblock \emph{Optim. Methods Softw.}, 3\penalty0 (4):\penalty0 311--326, 1994.

\bibitem[Dagr{\'e}ou et~al.(2024)Dagr{\'e}ou, Moreau, Vaiter, and Ablin]{dagreou2024lower}
M.~Dagr{\'e}ou, T.~Moreau, S.~Vaiter, and P.~Ablin.
\newblock A lower bound and a near-optimal algorithm for bilevel empirical risk minimization.
\newblock In \emph{AISTATS}, pages 82--90. PMLR, 2024.

\bibitem[Dagréou et~al.(2022)Dagréou, Ablin, Vaiter, and Moreau]{dagreou2022framework}
M.~Dagréou, P.~Ablin, S.~Vaiter, and T.~Moreau.
\newblock A framework for bilevel optimization that enables stochastic and global variance reduction algorithms.
\newblock In \emph{NeurIPS}, 2022.

\bibitem[Deledalle et~al.(2014)Deledalle, Vaiter, Fadili, and Peyr{\'e}]{deledalle2014sugar}
C.-A. Deledalle, S.~Vaiter, J.~M. Fadili, and G.~Peyr{\'e}.
\newblock {Stein Unbiased GrAdient estimator of the Risk (SUGAR) for multiple parameter selection}.
\newblock \emph{SIAM. J. Imag. Sci.}, 7\penalty0 (4):\penalty0 2448--2487, 2014.

\bibitem[Deledalle et~al.(2017)Deledalle, Papadakis, Salmon, and Vaiter]{deledalle2017clear}
C.-A. Deledalle, N.~Papadakis, J.~Salmon, and S.~Vaiter.
\newblock Clear: Covariant least-square refitting with applications to image restoration.
\newblock \emph{SIAM J. Imaging Sci.}, 10\penalty0 (1):\penalty0 243--284, 2017.
\newblock \doi{10.1137/16M1080318}.

\bibitem[Demidovich et~al.(2023)Demidovich, Malinovsky, Sokolov, and Richt{\'a}rik]{demidovich2023guide}
Y.~Demidovich, G.~Malinovsky, I.~Sokolov, and P.~Richt{\'a}rik.
\newblock A guide through the zoo of biased sgd.
\newblock \emph{NeurIPS}, 36, 2023.

\bibitem[Devolder et~al.(2014)Devolder, Glineur, and Nesterov]{devolder2014first}
O.~Devolder, F.~Glineur, and Y.~Nesterov.
\newblock First-order methods of smooth convex optimization with inexact oracle.
\newblock \emph{Math. Prog.}, 146:\penalty0 37--75, 2014.

\bibitem[Dieuleveut et~al.(2023)Dieuleveut, Fort, Moulines, and Wai]{dieuleveut2023stochastic}
A.~Dieuleveut, G.~Fort, E.~Moulines, and H.-T. Wai.
\newblock Stochastic approximation beyond gradient for signal processing and machine learning.
\newblock \emph{IEEE Trans. Sig. Proc.}, 2023.

\bibitem[Doucet and Tadic(2017)]{doucet2017asymptotic}
A.~Doucet and V.~Tadic.
\newblock Asymptotic bias of stochastic gradient search.
\newblock \emph{Ann. Appl. Probab.}, 27\penalty0 (6), 2017.

\bibitem[El~Ghaoui et~al.(2021)El~Ghaoui, Gu, Travacca, Askari, and Tsai]{elghaoui2021implicit}
L.~El~Ghaoui, F.~Gu, B.~Travacca, A.~Askari, and A.~Tsai.
\newblock Implicit deep learning.
\newblock \emph{SIAM J. Math. Data Sci.}, 3\penalty0 (3):\penalty0 930--958, 2021.
\newblock \doi{10.1137/20M1358517}.

\bibitem[Ermoliev(1983)]{ermoliev1983stochastic}
Y.~Ermoliev.
\newblock Stochastic quasigradient methods and their application to system optimization.
\newblock \emph{Stochastics: An International Journal of Probability and Stochastic Processes}, 9\penalty0 (1-2):\penalty0 1--36, 1983.

\bibitem[Finn et~al.(2017)Finn, Abbeel, and Levine]{finn2017model}
C.~Finn, P.~Abbeel, and S.~Levine.
\newblock Model-agnostic meta-learning for fast adaptation of deep networks.
\newblock In \emph{ICML}, pages 1126--1135. PMLR, 2017.

\bibitem[Folland(1999)]{folland1999real}
G.~B. Folland.
\newblock \emph{Real analysis: modern techniques and their applications}, volume~40.
\newblock John Wiley \& Sons, 1999.

\bibitem[Franceschi et~al.(2017)Franceschi, Donini, Frasconi, and Pontil]{franceschi2017forward}
L.~Franceschi, M.~Donini, P.~Frasconi, and M.~Pontil.
\newblock Forward and reverse gradient-based hyperparameter optimization.
\newblock In \emph{ICML}, pages 1165--1173. PMLR, 2017.

\bibitem[Franceschi et~al.(2018)Franceschi, Frasconi, Salzo, Grazzi, and Pontil]{franceschi2018bilevel}
L.~Franceschi, P.~Frasconi, S.~Salzo, R.~Grazzi, and M.~Pontil.
\newblock Bilevel programming for hyperparameter optimization and meta-learning.
\newblock In \emph{ICML}, pages 1568--1577. PMLR, 2018.

\bibitem[Fung et~al.(2022)Fung, Heaton, Li, McKenzie, Osher, and Yin]{WuFung2022JFB}
S.~W. Fung, H.~Heaton, Q.~Li, D.~McKenzie, S.~Osher, and W.~Yin.
\newblock Jfb: Jacobian-free backpropagation for implicit models.
\newblock \emph{Proceedings of the AAAI Conference on Artificial Intelligence}, 2022.

\bibitem[Garrigos and Gower(2023)]{garrigos2023handbook}
G.~Garrigos and R.~M. Gower.
\newblock Handbook of convergence theorems for (stochastic) gradient methods.
\newblock \emph{arXiv preprint arXiv:2301.11235}, 2023.

\bibitem[Gilbert(1992)]{gilbert1992automatic}
J.~C. Gilbert.
\newblock Automatic differentiation and iterative processes.
\newblock \emph{Optim. Methods Softw.}, 1\penalty0 (1):\penalty0 13--21, 1992.
\newblock \doi{10.1080/10556789208805503}.

\bibitem[Gower et~al.(2019)Gower, Loizou, Qian, Sailanbayev, Shulgin, and Richt{\'a}rik]{gower2019sgd}
R.~M. Gower, N.~Loizou, X.~Qian, A.~Sailanbayev, E.~Shulgin, and P.~Richt{\'a}rik.
\newblock Sgd: General analysis and improved rates.
\newblock In \emph{ICML}, pages 5200--5209. PMLR, 2019.

\bibitem[Grazzi et~al.(2020)Grazzi, Franceschi, Pontil, and Salzo]{grazzi2020iteration}
R.~Grazzi, L.~Franceschi, M.~Pontil, and S.~Salzo.
\newblock On the iteration complexity of hypergradient computation.
\newblock In \emph{ICML}, pages 3748--3758. PMLR, 2020.

\bibitem[Grazzi et~al.(2021)Grazzi, Pontil, and Salzo]{grazzi2021convergence}
R.~Grazzi, M.~Pontil, and S.~Salzo.
\newblock Convergence properties of stochastic hypergradients.
\newblock In \emph{AISTATS}, pages 3826--3834. PMLR, 2021.

\bibitem[Grazzi et~al.(2023)Grazzi, Pontil, and Salzo]{grazzi2023bilevel}
R.~Grazzi, M.~Pontil, and S.~Salzo.
\newblock Bilevel optimization with a lower-level contraction: Optimal sample complexity without warm-start.
\newblock \emph{J. Mach. Learn. Res.}, 24\penalty0 (167):\penalty0 1--37, 2023.

\bibitem[Grazzi et~al.(2024)Grazzi, Pontil, and Salzo]{grazzi2024nonsmooth}
R.~Grazzi, M.~Pontil, and S.~Salzo.
\newblock Nonsmooth implicit differentiation: Deterministic and stochastic convergence rates.
\newblock \emph{arXiv preprint arXiv:2403.11687}, 2024.

\bibitem[Griewank(1989)]{griewank1989automatic}
A.~Griewank.
\newblock On automatic differentiation.
\newblock \emph{Mathematical Programming: recent developments and applications}, 6\penalty0 (6):\penalty0 83--107, 1989.

\bibitem[Griewank and Faure(2003)]{griewank2003piggyback}
A.~Griewank and C.~Faure.
\newblock Piggyback differentiation and optimization.
\newblock In \emph{Large-scale PDE-constrained optimization}, pages 148--164. Springer, 2003.

\bibitem[Griewank and Walther(2008)]{griewank2008evaluating}
A.~Griewank and A.~Walther.
\newblock \emph{Evaluating derivatives: principles and techniques of algorithmic differentiation}.
\newblock SIAM, 2008.

\bibitem[Griewank et~al.(1993)Griewank, Bischof, Corliss, Carle, and Williamson]{griewank1993derivative}
A.~Griewank, C.~Bischof, G.~Corliss, A.~Carle, and K.~Williamson.
\newblock Derivative convergence for iterative equation solvers.
\newblock \emph{Optim. Methods Softw.}, 2\penalty0 (3-4):\penalty0 321--355, 1993.
\newblock \doi{10.1080/10556789308805549}.

\bibitem[Ji et~al.(2021)Ji, Yang, and Liang]{ji2021bilevel}
K.~Ji, J.~Yang, and Y.~Liang.
\newblock Bilevel optimization: Convergence analysis and enhanced design.
\newblock In \emph{ICML}, pages 4882--4892. PMLR, 2021.

\bibitem[Ji et~al.(2022)Ji, Yang, and Liang]{ji2022theoretical}
K.~Ji, J.~Yang, and Y.~Liang.
\newblock Theoretical convergence of multi-step model-agnostic meta-learning.
\newblock \emph{Journal of machine learning research}, 23\penalty0 (29):\penalty0 1--41, 2022.

\bibitem[Lorraine et~al.(2020)Lorraine, Vicol, and Duvenaud]{lorraine2020optimizing}
J.~Lorraine, P.~Vicol, and D.~Duvenaud.
\newblock Optimizing millions of hyperparameters by implicit differentiation.
\newblock In \emph{AISTATS}, volume 108, pages 1540--1552. PMLR, 2020.

\bibitem[Ma et~al.(2018)Ma, Bassily, and Belkin]{ma2018power}
S.~Ma, R.~Bassily, and M.~Belkin.
\newblock The power of interpolation: Understanding the effectiveness of sgd in modern over-parametrized learning.
\newblock In \emph{International Conference on Machine Learning}, pages 3325--3334. PMLR, 2018.

\bibitem[Maclaurin et~al.(2015)Maclaurin, Duvenaud, and Adams]{maclaurin2015gradient}
D.~Maclaurin, D.~Duvenaud, and R.~Adams.
\newblock Gradient-based hyperparameter optimization through reversible learning.
\newblock In \emph{ICML}, pages 2113--2122. PMLR, 2015.

\bibitem[Mehmood and Ochs(2020)]{mehmood2020automatic}
S.~Mehmood and P.~Ochs.
\newblock Automatic differentiation of some first-order methods in parametric optimization.
\newblock In \emph{AISTATS}, pages 1584--1594. PMLR, 2020.

\bibitem[Mehmood and Ochs(2022)]{mehmood2022fixed}
S.~Mehmood and P.~Ochs.
\newblock Fixed-point automatic differentiation of forward--backward splitting algorithms for partly smooth functions.
\newblock \emph{arXiv preprint arXiv:2208.03107}, 2022.

\bibitem[Moulines and Bach(2011)]{moulines2011non}
E.~Moulines and F.~Bach.
\newblock Non-asymptotic analysis of stochastic approximation algorithms for machine learning.
\newblock \emph{Advances in neural information processing systems}, 24, 2011.

\bibitem[Nedi{\'c} and Bertsekas(2010)]{nedic2010effect}
A.~Nedi{\'c} and D.~P. Bertsekas.
\newblock The effect of deterministic noise in subgradient methods.
\newblock \emph{Math. Prog.}, 125\penalty0 (1):\penalty0 75--99, 2010.

\bibitem[Nesterov(2013)]{nesterov2013introductory}
Y.~Nesterov.
\newblock \emph{Introductory lectures on convex optimization: A basic course}, volume~87.
\newblock Springer Science \& Business Media, 2013.

\bibitem[Paszke et~al.(2019)Paszke, Gross, Massa, Lerer, Bradbury, Chanan, Killeen, Lin, Gimelshein, Antiga, Desmaison, Kopf, Yang, DeVito, Raison, Tejani, Chilamkurthy, Steiner, Fang, Bai, and Chintala]{paszke2019pytorch}
A.~Paszke, S.~Gross, F.~Massa, A.~Lerer, J.~Bradbury, G.~Chanan, T.~Killeen, Z.~Lin, N.~Gimelshein, L.~Antiga, A.~Desmaison, A.~Kopf, E.~Yang, Z.~DeVito, M.~Raison, A.~Tejani, S.~Chilamkurthy, B.~Steiner, L.~Fang, J.~Bai, and S.~Chintala.
\newblock Pytorch: An imperative style, high-performance deep learning library.
\newblock In \emph{NeurIPS}, pages 8024--8035. 2019.

\bibitem[Pedregosa(2016)]{pedregosa2016hyperparameter}
F.~Pedregosa.
\newblock Hyperparameter optimization with approximate gradient.
\newblock In \emph{ICML}, pages 737--746. PMLR, 2016.

\bibitem[Rajeswaran et~al.(2019)Rajeswaran, Finn, Kakade, and Levine]{rajeswaran2019meta}
A.~Rajeswaran, C.~Finn, S.~M. Kakade, and S.~Levine.
\newblock Meta-learning with implicit gradients.
\newblock \emph{NeurIPS}, 32, 2019.

\bibitem[Ramaswamy and Bhatnagar(2017)]{ramaswamy2017analysis}
A.~Ramaswamy and S.~Bhatnagar.
\newblock Analysis of gradient descent methods with nondiminishing bounded errors.
\newblock \emph{IEEE Trans. Autom. Control}, 63\penalty0 (5):\penalty0 1465--1471, 2017.

\bibitem[Robbins and Monro(1951)]{robbins1951stochastic}
H.~Robbins and S.~Monro.
\newblock A stochastic approximation method.
\newblock \emph{Ann. Math. Stat.}, pages 400--407, 1951.

\bibitem[Rumelhart et~al.(1986)Rumelhart, Hinton, and Williams]{rumelhart1986learningrepresentations}
D.~E. Rumelhart, G.~E. Hinton, and R.~J. Williams.
\newblock Learning representations by back-propagating errors.
\newblock \emph{Nature}, 323\penalty0 (6088):\penalty0 533--536, 1986.
\newblock ISSN 1476-4687.
\newblock \doi{10.1038/323533a0}.

\bibitem[Shaban et~al.(2019)Shaban, Cheng, Hatch, and Boots]{shaban2019truncated}
A.~Shaban, C.-A. Cheng, N.~Hatch, and B.~Boots.
\newblock Truncated back-propagation for bilevel optimization.
\newblock In \emph{AISTATS}, pages 1723--1732, 2019.

\bibitem[Solodov and Zavriev(1998)]{solodov1998error}
M.~V. Solodov and S.~Zavriev.
\newblock Error stability properties of generalized gradient-type algorithms.
\newblock \emph{J. Optim. Theory Appl.}, 98:\penalty0 663--680, 1998.

\bibitem[Varre et~al.(2021)Varre, Pillaud-Vivien, and Flammarion]{varre2021last}
A.~V. Varre, L.~Pillaud-Vivien, and N.~Flammarion.
\newblock Last iterate convergence of sgd for least-squares in the interpolation regime.
\newblock \emph{Advances in Neural Information Processing Systems}, 34:\penalty0 21581--21591, 2021.

\bibitem[Wengert(1964)]{wengert1964simpleautomatic}
R.~E. Wengert.
\newblock A simple automatic derivative evaluation program.
\newblock \emph{Commun. ACM.}, 7\penalty0 (8):\penalty0 463--464, Aug. 1964.
\newblock ISSN 0001-0782.
\newblock \doi{10.1145/355586.364791}.

\end{thebibliography}
\bibliographystyle{abbrvnat}

\newpage
\appendix
\section{Justification of the permutation of integrals and derivatives}
\label{sec:justificationDerivativeIntegral}
We may assume without loss of generality that both $\sobj(0,0; \sample)$ and $\nabla_{(\point,\param)} \sobj(0,0; \sample)$ are integrable thanks to \Cref{asm:noise}\ref{asm:noise-integ}. 
Concatenate the variables $\point$ and $\param$, such that $z = (\point,\param)$ and consider the function
\begin{align}
    g \colon (z;\sample) \mapsto \frac{\sobj (z;\sample)}{\sqnorm{z} + 1}.
\end{align}
Since the gradient of $\sobj$ is $\smooth$-Lipschitz in $z$ by \cref{asm:obj}\ref{asm:obj-smooth}, we have using the descent lemma \cite[Lemma 1.2.3]{nesterov2013introductory}
\begin{align}
    |\sobj(z;\sample) - \sobj(0;\sample)| \leq \|\nabla_z \sobj(0;\sample)\|\|z\| + \frac{L}{2} \|z\|^2
\end{align}
so that $g$ is upper bounded by an integrable function uniformly in $z$ as
\begin{align}\label{eq:boundg}
    |g(z;\sample)| \leq  |\sobj(0;\sample)| + \|\nabla_z \sobj(0;\sample)\| + \frac{L}{2} .
\end{align}
We also have
\begin{align}
    \nabla_z g(z;\sample) &= \nabla_z \sobj(z;\sample)\frac{1}{\sqnorm{z} + 1}  - z\frac{2\sobj (z;\sample)}{(\sqnorm{z} + 1)^2} = \nabla_z \sobj(z;\sample)\frac{1}{\sqnorm{z} + 1}  - z\frac{2g(z;\sample)}{\sqnorm{z} + 1}\\
    & = \nabla_z \sobj(0;\sample)\frac{1}{\sqnorm{z} + 1} + (\nabla_z \sobj(z;\sample) - \nabla_z \sobj(0;\sample))\frac{1}{\sqnorm{z} + 1} - z\frac{2g(z;\sample)}{\sqnorm{z} + 1}
\end{align}
Using again Lipschitz continuity of the gradient of $\sobj$, $\nabla_z g(z;\sample)$ is upper bounded by an integrable function, uniformly in $z$, as
\begin{align}\label{eq:boundnabg}
    \|\nabla_z g(z;\sample)\| &\leq \|\nabla_z \sobj(0;\sample)\| + \smooth + 2g(z;\sample) \\
    &\leq 3\|\nabla_z \sobj(0;\sample)\| + 2 \smooth + 2 |\sobj(0;\sample)|  .
\end{align}

Hence, we have that i) $\nabla_z g(z;\sample)$ exists for all $z$ (as $\sobj$ is $C^1$) and ii) both $\sample\mapsto g(z;\sample)$ and $\sample\mapsto \nabla_z g(z;\sample)$ are bounded by functions in $L^1(\prob)$ uniformly in $z$ thanks to \cref{eq:boundg,eq:boundnabg} since $ |\sobj(0;\sample)|$ and $ \|\nabla_z \sobj(0;\sample)\|$ belong to $ L^1(\prob)$. 
Hence, we have the appropriate domination assumptions to differentiate under the integral for the function $g$ so that for all $z$, the function $G:z\mapsto \ex{g(z;\sample)}$ is differentiable and $\nabla_z G(z) = \ex{\nabla_z g(z;\sample)}$ (see \eg \cite[Th.~2.27]{folland1999real}).

Now, turning back to $\sobj$, since for all $z$, ${\sobj (z;\sample)} = g(z;\sample) ({\sqnorm{z} + 1})$, $F(z) = G(z)({\sqnorm{z} + 1})$ and thus $\nabla_z F(z) = \nabla_z G(z)({\sqnorm{z} + 1}) + 2zG(z)$. Also, for all $z$
\begin{align}
    \nabla_z \sobj(z;\sample) &= \nabla_z g(z;\sample) (\sqnorm{z} + 1) + 2zg(z;\sample)
\end{align}
whose right hand side is integrable as shown above. This enables us to conclude that for all $z$, 
\begin{align}
   \ex{ \nabla_z \sobj(z;\sample) } &= \ex{ \nabla_z g(z;\sample)} (\sqnorm{z} + 1) + 2z \ex{g(z;\sample) } \\
   &= \nabla_z G(z)({\sqnorm{z} + 1}) + 2zG(z) =  \nabla_z F(z)\,.
\end{align}

As for the second derivative, $\nabla_z \sobj(z;\sample)$ is $C^1$ with uniformly bounded derivatives so that we may apply differentiation under the integral once again to obtain that the Hessian of the expectation is the expectation of the Hessian.

\section{Proofs from the main text}
\label{sec:proofMainText}

\subsection{Proof of \Cref{lem:derivativeBounded}}
\begin{lemma}
    \label{lem:derivativeBoundedSep2}
    Let $\mu > 0$ and $(\step_k)_{k \in \N}$ be a sequence of positive numbers.
    Assume that $(D_k)_{k \in \N}$ is a sequence of matrices of fixed size, such that $D_{k+1} = A_kD_k + B_k$, for matrices $(A_k)_{k\in\N}$ and $(B_k)_{k \in \N}$ of appropriate size where $\|A_k\|_{\mathrm{op}} \leq 1 - \mu \step_k$ and $\|B_k\| \leq B\step_k$ for all $k$. Then for all $k$, $\|D_k\| \leq \max\{\|D_0\|, B / \mu\}$.
\end{lemma}
\begin{proof}
	We have, using the fact that $\|A_kD_k\| \leq \|A_k\|_{\mathrm{op}} \|D_k\|$,
	\begin{align}
		\|D_{k+1}\| &= \|A_k D_k + B\| \leq\|A_k D_k \| + \|B_k\| \leq\|A_k\|_{\mathrm{op}}\cdot \| D_k \| + \|B_k\| 
	\leq (1 - \mu \step_k) \| D_k \| + B \step_k	.
	\end{align}
	There are two cases. 
        \begin{itemize}
            \item If $\|D_k\| \geq B / \mu$, then $\|D_{k+1}\| \leq (1-\mu\step_k) \| D_k \| + \step_k B \leq\|D_k\|$.
            \item  If $\|D_k\| \leq B / \mu$, then  $\|D_{k+1}\| \leq  (1-\mu\step_k) B / \mu + \step_k B = B/ \mu$.
        \end{itemize}
   The proof is then by induction: if $\|D_k\| \leq \max\{\|D_0\|, B / \mu\}$, the property extends to $D_{k+1}$ by using one of the two cases.
\end{proof}

\begin{proof}[Proof of \Cref{lem:derivativeBounded}] The recursion \eqref{eq:derRecrusion} can be written
\begin{align}
    D_{k+1} = A_k D_k + B_k
\end{align}
where for all $k$,  $D_k = \derparam \currP$, $A_k = I -\curr[\step] \dderpoint\sobj(\currP[\point],\param;\next[\sample])$ and $B_k = - \curr[\step]   \dderpointparam\sobj(\currP[\point],\param;\next[\sample])$. Using \Cref{asm:obj}, we have that $\|A_k\|_{\mathrm{op}} \leq 1 - \step_k \strong$ and $\|B_k\|\leq \sqrt{\dimsalt} \|B_k\|_{\mathrm{op}} \leq \sqrt{\dimsalt} \smooth' \step_k$. The result follows from \Cref{lem:derivativeBoundedSep2}.
 \end{proof}

\subsection{Proof of \Cref{prop:cvsgdnoise}}

\begin{proof}[Proof of \Cref{prop:cvsgdnoise}]
First, we recall that the expected norm of a stochastic gradient can be controlled for any $\run \in \N$ as
\begin{align}
    \condex{\sqnorm{  \sgradalt( \curr   ;\next[\sample]) }}[\curr[\filter]] &\leq 2 \condex{\sqnorm{  \sgradalt( \opt   ;\next[\sample]) }}[\curr[\filter]] + 2\condex{\sqnorm{  \sgradalt( \curr   ;\next[\sample]) - \sgradalt( \opt   ;\next[\sample]) }}[\curr[\filter]] \\
    &\leq 2\noisevar + 2\smooth^2 \sqnorm{\curr - \opt} \label{eq:noisemaster}
\end{align}
where we used \cref{asm:sgd}\ref{asm:sgd-smooth} and \ref{asm:sgd-noise} in the second inequality.

By definition of \eqref{eq:sgdnpoise}, we have for all $k \in \N$
\begin{align}
        \sqnorm{ \next - \opt } &=   \sqnorm{\curr - \opt} +  \curr[\step]^2 \sqnorm{ \sgradalt( \curr ; \next[\sample]) + \next[\error] } - 2\curr[\step] \inner{\curr - \opt}{  \sgradalt( \curr ; \next[\sample]) + \next[\error] } \\
        &\leq   \sqnorm{\curr - \opt} +  2\curr[\step]^2 \left( \sqnorm{ \sgradalt( \curr ; \next[\sample]) } +  \sqnorm{  \next[\error] }  \right) - 2\curr[\step] \inner{\curr - \opt}{  \sgradalt( \curr ; \next[\sample])} \\
        &~~~~~ +  2 \curr[\step] \norm{\curr - \opt }\norm{\next[\error] } .
\end{align}
Taking the expectation conditioned on $\curr[\filter]$, we get with our assumption on the errors that
\begin{align}
        \condex{\sqnorm{ \next - \opt }}[\curr[\filter]]        
        &\leq \sqnorm{\curr - \opt} + \curr[\step]^2 \left( 4\smooth^2 \sqnorm{\curr - \opt}  + 4\noisevar +  2\condex{\sqnorm{\next[\error]}}[\curr[\filter]]  \right)\\
         &~~~~~ - 2\curr[\step] \inner{\curr - \opt}{   \condex{\sgradalt( \curr ; \next[\sample]) }[\curr[\filter]] } \\       
        &~~~~~ + 2 \curr[\step] \norm{\curr - \opt }\condex{\norm{\next[\error] }}[\curr[\filter]]  \\
        &\leq \left(1  - 2\curr[\step] \strong + 4 \curr[\step]^2 \smooth^2   \right)\sqnorm{\curr - \opt} + \curr[\step]^2 \left( 4 \noisevar +  2 \condex{\sqnorm{\next[\error]}}[\curr[\filter]] \right)\\
        &~~~~~ + 2 \curr[\step] \norm{\curr - \opt} \condex{\norm{\next[\error]}}[\curr[\filter]] \label{eq:endsgd}
\end{align}
where we used successively Eq.~\eqref{eq:noisemaster} and 
\cref{asm:sgd}\ref{asm:sgd-strong}. Now using Jensen's inequality and the Cauchy-Schwartz inequality: $\ex{XY} \leq \sqrt{\ex{X^2} \ex{Y^2}}$ for square integrable random variables, we have the following bound on the full expectation of the last product,
\begin{align}
    \ex{\norm{\curr - \opt} \condex{\norm{\next[\error]}}[\curr[\filter]]} &\leq \sqrt{\ex{\sqnorm{\curr - \opt}}\ex{ \condex{\norm{\next[\error]}}[\curr[\filter]]^2}} \\
    &\leq  \sqrt{\ex{\sqnorm{\curr - \opt}}} \sqrt{\ex{ \condex{\sqnorm{\next[\error]}}[\curr[\filter]]}}\\
    &=  \sqrt{\ex{\sqnorm{\curr - \opt}}} \sqrt{\ex{ \sqnorm{\next[\error]}}}
\end{align}

Now, our condition on the stepsize parameters implies that  $ - 2\curr[\step] \strong + 4 \curr[\step]^2 \smooth^2 \leq  - \curr[\step] \strong$.  By taking full expectation on both sides of \eqref{eq:endsgd}, we obtain that 
\begin{align}
        \ex{\sqnorm{ \next - \opt }}  &\leq \left(1  - \curr[\step] \strong \right) \ex{\sqnorm{\curr - \opt}} +  \curr[\step]^2 \left( 4 \noisevar +  2\curr[\Ebound]^2  \right) + 2 \curr[\step] \sqrt{\ex{\sqnorm{\curr - \opt}}} \curr[\Ebound]  
\end{align}
We set $\curr[D] =  \sqrt{\ex{\sqnorm{ \curr - \opt }}}$ so that we have the following deterministic recursion:
\begin{align}
	\next[D]^2 \leq \left(1 - \strong \curr[\step] \right )\curr[D]^2  + 2\curr[\step]^2(\curr[\Ebound]^2 + 2 \noisevar) + 2\curr[\step]\curr[\Ebound]  \curr[D].
\end{align}
\end{proof}

\section{Technical Lemmas}
\label{sec:appendixTechnicalLemmas}

\begin{lemma}	\label{lem:technicalLemmaSequence}
 	Let $\seqinf{\step}{\run}$ and $\seqinf{\Ebound}{\run}$ be non-negative and non-increasing. Assume that $\seqinf{\step}{\run}$ is non-summable and that $0<\curr[\step]\leq\frac{1}{\strong}$ for all $\run$. Let $\seqinf{D}{\run}$ be a non-negative sequence satisfying for all $\run$
 \begin{align}
	\next[D]^2 \leq \left(1 - \strong \curr[\step] \right )\curr[D]^2  + 2\curr[\step]^2(\curr[\Ebound]^2 + 2 \noisevar) + 2\curr[\step]\curr[\Ebound]  \curr[D] \, .
	\label{eq:mainRecursion}
\end{align}
 Consider the quantity
    \begin{align}
	\curr[\delta] &= \frac{\sqrt{ 4\curr[\step]^2 \curr[\Ebound]^2  + 8\strong\curr[\step]^3(\curr[\Ebound]^2 + 2 \noisevar)} + 2 \curr[\Ebound]\curr[\step]}{2\strong\curr[\step]} = \frac{\sqrt{\curr[\Ebound]^2 + 2\strong\curr[\step](\curr[\Ebound]^2 + 2 \noisevar)} + \curr[\Ebound]}{\strong}.
	\end{align}
	Then, $\seqinf{\delta}{\run}$ is positive, non-increasing, and for any $\delta > \lim_{k \to \infty} \curr[\delta]$ 
	\begin{align}
		{\lim\sup}_{k \to \infty} \quad \curr[D] \leq \delta.
	\end{align}
\end{lemma}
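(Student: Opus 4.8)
The plan is to recognise $\curr[\delta]$ as the unique nonnegative fixed point of the one-step map attached to the recursion~\eqref{eq:mainRecursion}, and then to run a Lyapunov-type argument: above $\curr[\delta]$ the recursion strictly contracts $\curr[D]^2$ by at least a constant multiple of $\curr[\step]$, while below $\curr[\delta]$ it cannot escape the region $\{D\le\curr[\delta]\}$. Non-summability of $\seqinf{\step}{\run}$ then forces the sequence into, and keeps it inside, any slab $\{D\le\delta\}$ with $\delta>\lim_k\curr[\delta]$.

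\emph{Monotonicity of $\curr[\delta]$.} Since $\seqinf{\step}{\run}$ and $\seqinf{\Ebound}{\run}$ are nonnegative and non-increasing, the product $\curr[\step](\curr[\Ebound]^2+2\noisevar)$ is nonnegative and non-increasing, so the radicand in the definition of $\curr[\delta]$ is non-increasing; hence $\curr[\delta]$ is non-increasing, nonnegative, and positive as soon as $\curr[\Ebound]>0$ or $\noisedev>0$. In particular $\delta_\infty:=\lim_{k\to\infty}\curr[\delta]$ exists.

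\emph{One-step analysis.} Denote by $\phi_k(D):=(1-\strong\curr[\step])D^2+2\curr[\step]^2(\curr[\Ebound]^2+2\noisevar)+2\curr[\step]\curr[\Ebound]D$ the right-hand side of~\eqref{eq:mainRecursion}, so that $\next[D]^2\le\phi_k(\curr[D])$, and observe that
\[
\phi_k(D)-D^2=\curr[\step]\,q_k(D),\qquad q_k(D):=-\strong D^2+2\curr[\Ebound]D+2\curr[\step](\curr[\Ebound]^2+2\noisevar).
\]
A direct computation shows that $\curr[\delta]$ is precisely the positive root of $q_k$ (the two displayed forms of $\curr[\delta]$ are this root written before and after cancelling $\curr[\step]$), and that the second root $\tfrac{2\curr[\Ebound]}{\strong}-\curr[\delta]$ is nonpositive; equivalently $\curr[\delta]$ is the unique nonnegative fixed point of $\phi_k$. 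Using $\phi_k(\curr[\delta])=\curr[\delta]^2$ one factors
\[
\phi_k(D)-\curr[\delta]^2=(D-\curr[\delta])\big[(1-\strong\curr[\step])(D+\curr[\delta])+2\curr[\step]\curr[\Ebound]\big],
\]
whose bracket is nonnegative for $D\ge0$ because $\strong\curr[\step]\le1$; hence $\phi_k(\curr[D])\le\curr[\delta]^2$ whenever $0\le\curr[D]\le\curr[\delta]$. When instead $\curr[D]>\curr[\delta]$, factoring $q_k$ through its two roots gives $q_k(\curr[D])=-\strong(\curr[D]-\curr[\delta])\big(\curr[D]-(\tfrac{2\curr[\Ebound]}{\strong}-\curr[\delta])\big)\le-\strong\curr[D](\curr[D]-\curr[\delta])<0$, since the second root is $\le0$. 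Combining both cases, for every $\curr[D]\ge0$,
\[
\next[D]^2\le\phi_k(\curr[D])\le\max\{\curr[D]^2,\ \curr[\delta]^2\},\qquad\text{and}\qquad\next[D]^2\le\curr[D]^2-\strong\curr[\step]\,\curr[D](\curr[D]-\curr[\delta])\ \text{ when }\curr[D]>\curr[\delta].
\]

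\emph{Conclusion.} Fix $\delta>\delta_\infty$, pick $\delta'\in(\delta_\infty,\delta)$, and let $K$ be such that $\curr[\delta]\le\delta'$ for all $\run\ge K$ (possible since $\curr[\delta]$ decreases to $\delta_\infty<\delta'$). If $D_{k_0}\le\delta$ for some $k_0\ge K$, then the first inequality above gives, by induction, $\curr[D]\le\delta$ for all $k\ge k_0$ (the step being $\next[D]^2\le\max\{\curr[D]^2,(\delta')^2\}\le\delta^2$), whence $\limsup_k\curr[D]\le\delta$. Otherwise $\curr[D]>\delta$ — hence $\curr[D]>\delta'\ge\curr[\delta]$ — for all $\run\ge K$, and the second inequality gives $\next[D]^2\le\curr[D]^2-c\,\curr[\step]$ with $c:=\strong\delta(\delta-\delta')>0$; summing from $K$ yields $\next[D]^2\le D_K^2-c\sum_{j=K}^{k}\step_j\to-\infty$, contradicting $\curr[D]^2\ge0$. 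So the second case cannot occur, and $\limsup_k\curr[D]\le\delta$, which is the assertion since $\delta>\delta_\infty$ was arbitrary. The only mildly delicate points are the two algebraic factorizations and checking that the relevant root of $q_k$ is the nonnegative one (this is where $\strong\curr[\step]\le1$ enters); everything else is bookkeeping, and it is the non-summability of $\seqinf{\step}{\run}$ that ultimately drives $\limsup_k\curr[D]$ below every $\delta>\delta_\infty$.
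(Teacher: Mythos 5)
Your proof is correct, and it takes a route that differs from the paper's in the way the one-step inequality is exploited. The paper works with the map $F_k(t)=(1-\strong\curr[\step])t+2\curr[\step]\curr[\Ebound]\sqrt{t}+2\curr[\step]^2(\curr[\Ebound]^2+2\noisevar)$ acting on $t=\curr[D]^2$, uses its concavity and the fixed point $F_k(\curr[\delta]^2)=\curr[\delta]^2$, linearizes via $\curr[D]-\delta\le\frac{1}{2\delta}(\curr[D]^2-\delta^2)$ and the observation $2\curr[\Ebound]/\strong\le\curr[\delta]$, and in the persistent case $\curr[D]>\delta$ obtains the multiplicative contraction $\next[D]^2-\delta^2\le(1-\frac{\strong\curr[\step]}{2})(\curr[D]^2-\delta^2)$, concluding because the product of these factors vanishes by non-summability. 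You instead treat the right-hand side as a quadratic $\phi_k$ in $\curr[D]$, identify $\curr[\delta]$ as its positive root (checking the other root is nonpositive), and derive two clean one-step facts: invariance of $\{D\le\delta\}$ once $\curr[\delta]\le\delta'<\delta$, and the additive decrease $\next[D]^2\le\curr[D]^2-\strong\curr[\step]\curr[D](\curr[D]-\curr[\delta])$ above the root, which rules out the case $\curr[D]>\delta$ for all large $\run$ by telescoping against the divergent series $\sum\curr[\step]$. Both arguments share the same dichotomy and both hinge on non-summability; yours is more elementary (no concavity or linearization step, and it makes the monotonicity of $\curr[\delta]$ explicit, which the paper uses silently), while the paper's version additionally yields a rate of approach to the noise ball through the product $\prod(1-\strong\step_i/2)$, which is qualitatively informative even though the lemma does not require it. One cosmetic point: as you note, $\curr[\delta]$ is only strictly positive when $\curr[\Ebound]>0$ or $\noisedev>0$; in the degenerate case $\curr[\Ebound]=\noisedev=0$ it vanishes, but your argument (like the paper's) does not use positivity, so nothing is lost.
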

\begin{proof}
    Set for each $\run \in \mathbb{N}$,  $\curr[F] \colon \mathbb{R}_+ \to \mathbb{R}_+$, with $\curr[F](t) =  \left(1 - \strong \curr[\step] \right )t  +  2\curr[\step]\curr[\Ebound]  \sqrt{t} + 2\curr[\step]^2(\curr[\Ebound]^2 + 2 \noisevar)$. We have that $\curr[F]$ is increasing, concave, and $\curr[F](\curr[\delta]^2) = \curr[\delta]^2$. By assumption, for all $k$ sufficiently large, we have $\curr[\delta] < \delta$ so that $\curr[F](\delta^2) \leq \delta^2$ as $t \mapsto \curr[F](t^2) -t^2$ is negative for $t \geq \curr[\delta]$.

    Plugging this into \eqref{eq:mainRecursion}, we obtain 
	\begin{align}
		\next[D]^2 - \delta^2 
		&\leq \left(1 - \strong \curr[\step] \right )\curr[D]^2  +  2\curr[\step]\curr[\Ebound]  \curr[D] + 2\curr[\step]^2(\curr[\Ebound]^2 + 2 \noisevar) - \curr[F](\delta^2)\\
		&= \left(1 - \strong \curr[\step] \right )(\curr[D]^2 - \delta^2) +  2\curr[\step]\curr[\Ebound]  (\curr[D] - \delta) \,.
	\end{align}
	Using the fact that $\strong\curr[\step] \leq 1$, we deduce that if $\curr[D] \leq \delta$, then $D_{\run+i} \leq \delta$ for all $i \in \mathbb{N}$ and the result follows. We continue assuming that $\curr[D] > \delta$ for all $\run \in \mathbb{N}$.

	Using the concavity of the square root, we have $\curr[D]-\delta = \sqrt{\curr[D]^2} - \sqrt{\delta^2} \leq \frac{1}{2\sqrt{\delta^2}}(\curr[D]^2 - \delta^2)$. We deduce that
	\begin{align}
		\next[D]^2 - \delta^2 &\leq \left(1 - \strong \curr[\step]  +  \frac{\curr[\step]\curr[\Ebound]}{\delta}  \right)(\curr[D]^2 - \delta^2 ).
	\end{align}
	We notice that for all $k$, $\frac{2\curr[\Ebound]}{\strong} \leq \curr[\delta]$ so that for $\run$ large enough, $\frac{2\curr[\Ebound]}{\strong} \leq \delta$, and $\frac{\curr[\step]\curr[\Ebound]}{\delta} \leq \frac{\strong \curr[\step]}{2}$, and we obtain
	\begin{align}
		\next[D]^2 - \delta^2 &\leq \left(1 - \frac{\strong \curr[\step]}{2}\right)(\curr[D]^2 - \delta^2 ).
	\end{align}
	So there is an index $k_0$ such that for all $k \geq k_0$, we have $\curr[D]^2 - \delta^2 \leq \prod_{i = k_0}^\run \left(1 - \frac{\strong \step_i}{2}\right) (D_{k_0}^2 - \delta^2 )$ and the right hand side decreases to $0$ as $k \to \infty$ because $\curr[\step]$ is non-summable. This concludes the proof.
\end{proof}

\begin{lemma}
    Let $\curr[\step]=\frac{2\strong}{\strong^2 \run + 8L^2}$ for all $k \in \N$ and $\seqinf{D}{\run}$ be a non-negative sequence satisfying, for all $\run$,
    \begin{align}
        \next[D]^2 \leq \left(1 - \strong \curr[\step] \right )\curr[D]^2  + 4\curr[\step]^2  \noisevar.
    \end{align}
    
    Then we have, for all $\run \in \N$,
    \begin{align}
        \next[D]^2 \leq \frac{1}{k+8\kappa^2} \left(8 \kappa^2 D_0^2 + \frac{2\noisevar}{L^2} + \frac{16\sigma^2}{\strong^2} \log \left(1 + \frac{k}{8\kappa^2} \right) \right) .
    \end{align}
    \label{lem:convergenceOneOverKx}
\end{lemma}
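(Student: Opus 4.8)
The plan is to recast the claim as $a_k D_{k+1}^2 \le g(k)$ for every $k\in\N$, where I abbreviate $a_k \defeq k + 8\kappa^2$ and $g(k) \defeq 8\kappa^2 D_0^2 + \frac{2\sigma^2}{L^2} + \frac{16\sigma^2}{\mu^2}\log(1 + k/(8\kappa^2))$, and to prove it by induction on $k$. The preliminary step is a rewriting of the data: since $\mu^2 k + 8L^2 = \mu^2 a_k$, one has $\mu\eta_k = 2/a_k$ and $4\eta_k^2\sigma^2 = 16\sigma^2/(\mu^2 a_k^2)$, so the assumed recursion reads $D_{k+1}^2 \le (1 - 2/a_k) D_k^2 + 16\sigma^2/(\mu^2 a_k^2)$; note also that $a_k \ge 8\kappa^2 \ge 8$, so every factor $1 - 2/a_k$ lies in $(0,1)$.

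For the base case $k=0$, I bound $1 - 2/a_0 \le 1$, multiply the recursion by $a_0 = 8\kappa^2$, and use $\mu\kappa = L$ (hence $16\sigma^2/(8\mu^2\kappa^2) = 2\sigma^2/L^2$) together with $\log 1 = 0$ to obtain exactly $a_0 D_1^2 \le 8\kappa^2 D_0^2 + 2\sigma^2/L^2 = g(0)$. For the inductive step, assume $a_{k-1} D_k^2 \le g(k-1)$ for some $k\ge 1$ (this is precisely the lemma's claim at the previous index). Multiplying the rewritten recursion by $a_k>0$ gives $a_k D_{k+1}^2 \le (a_k - 2) D_k^2 + 16\sigma^2/(\mu^2 a_k)$; since $a_k - 2 = a_{k-1} - 1 \le a_{k-1}$ and $D_k^2 \ge 0$, we get $(a_k - 2) D_k^2 \le a_{k-1} D_k^2 \le g(k-1)$, hence $a_k D_{k+1}^2 \le g(k-1) + 16\sigma^2/(\mu^2 a_k)$.

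It then remains to check $g(k) - g(k-1) \ge 16\sigma^2/(\mu^2 a_k)$. Computing the telescoping difference, $g(k) - g(k-1) = \frac{16\sigma^2}{\mu^2}\bigl(\log(1 + \tfrac{k}{8\kappa^2}) - \log(1 + \tfrac{k-1}{8\kappa^2})\bigr) = \frac{16\sigma^2}{\mu^2}\log\frac{a_k}{a_k - 1}$, and $\log\frac{a_k}{a_k-1} = -\log(1 - 1/a_k) \ge 1/a_k$, which is the required inequality. Combining the last two displays gives $a_k D_{k+1}^2 \le g(k)$, the induction closes, and dividing by $a_k$ yields the stated bound. I expect no real obstacle here: the only care needed is the constant bookkeeping — making sure $8\kappa^2$, $2\sigma^2/L^2$ and $16\sigma^2/\mu^2$ line up exactly, all of which follow from $\mu\kappa = L$ — together with the trivial positivity facts $a_k > 2$ and $a_{k-1}\ge 1$ that license the manipulations above; an alternative via unrolling the recursion and summing $\sum_j \eta_j^2\prod_{i>j}(1-\mu\eta_i)$ works too but is notationally heavier.
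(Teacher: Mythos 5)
Your proof is correct and takes essentially the same route as the paper: both arguments multiply the recursion by the weight $a_k = k+8\kappa^2$ (equivalently $\mu^2 k + 8L^2$), use $a_k-2\le a_{k-1}$, and telescope, with the same constant bookkeeping via $\mu\kappa=L$. The only difference is presentational: you close the telescope by induction on the stated bound using the per-step inequality $\log\frac{a_k}{a_k-1}\ge \frac{1}{a_k}$, whereas the paper unrolls the recursion and bounds $\sum_{i=0}^k \frac{1}{i+8\kappa^2}$ by an integral comparison --- the same estimate applied termwise rather than in aggregate.
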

\begin{proof}
    From the recursion, we obtain
    \begin{align}
        \next[D]^2 &\leq \left(1 -   \frac{2\strong^2}{\strong^2 \run + 8L^2} \right )\curr[D]^2  +  \frac{16\strong^2\noisevar}{(\strong^2 \run + 8L^2)^2}  \\
        (\strong^2 \run + 8L^2)\next[D]^2 &\leq \left(\strong^2 \run + 8L^2 -   2\strong^2 \right )\curr[D]^2  +  \frac{16\strong^2\noisevar}{(\strong^2 \run + 8L^2)} \\
        &\leq \left(\strong^2 (\run - 1) + 8L^2 \right )\curr[D]^2  +  \frac{16\strong^2\noisevar}{(\strong^2 \run + 8L^2)}
    \end{align}
    from which we deduce that 
    \begin{align}
       (\strong^2 \run + 8L^2)\next[D]^2&\leq \left(8L^2  - \strong^2\right )D_0^2 + \sum_{i=0}^k \frac{16\strong^2\noisevar}{(\strong^2 i + 8L^2)} \\
       &\leq 8L^2D_0^2 + 16\noisevar \sum_{i=0}^k \frac{1}{( i + \frac{8L^2}{\strong^2})}\\
       &\leq 8L^2D_0^2 + 16\noisevar \left( \frac{\strong^2}{8L^2} + \log \left(1 + \frac{k\strong^2}{8L^2} \right) \right) 
    \end{align}
    where the last inequality is by integral series comparison. All in all, we obtain
    \begin{align}
        \next[D]^2 &\leq \frac{8 L^2 D_0^2}{ \strong^2 \run + 8L^2} + \frac{16\noisevar}{\strong^2 \run + 8L^2} \left( \frac{\strong^2}{8L^2} + \log \left(1 + \frac{k\strong^2}{8L^2} \right) \right) \\
        &= \frac{8 \kappa^2D_0^2}{ 8 \kappa^2 + \run } + \frac{2\noisevar}{L^2( \run + 8\kappa^2)}  + \frac{16 \sigma^2\log \left(1 + \frac{k\strong^2}{8L^2} \right)}{\strong^2(k + 8 \kappa^2)}\\
        &= \frac{1}{k+8\kappa^2} \left(8 \kappa^2 D_0^2 + \frac{2\noisevar}{L^2} + \frac{16\sigma^2}{\strong^2} \log \left(1 + \frac{k}{8\kappa^2} \right) \right) .
    \end{align}
\end{proof}

\begin{lemma}
    Let $\curr[\step]=\frac{2\strong}{\strong^2 \run + 8L^2}$, for all $k \in \N$, $\kappa = \frac{L}{\strong}$, and $\seqinf{D}{\run}$ be a non-negative sequence satisfying, for all $\run$,
     \begin{align}
	\next[D]^2 \leq \left(1 - \strong \curr[\step] \right )\curr[D]^2  + 2\curr[\step]^2(\curr[\Ebound]^2 + 2 \noisevar) + 2\curr[\step]\curr[\Ebound]  \curr[D] \, .   
    \end{align}
    where there are constants $A,B > 0$ such that, for all $k \in \N$,
    \begin{align}
        B_k^2 \leq \frac{A + B \log\left(k+8 \kappa^2 \right)}{k + 8 \kappa^2}.
    \end{align}
    Then, we have 
    \begin{align}
        \next[D]^2  \leq \frac{8\kappa^2D_0^2}{\run + 8\kappa^2} +\frac{1}{\strong^2}\frac{\left(5(B+A) + 8 \noisevar\right)\log(k + 8\kappa^2)^2}{ \run + 8\kappa^2}
    \end{align}
    \label{lem:convergenceOneOverKDerx}
\end{lemma}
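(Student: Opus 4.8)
The argument runs parallel to the proof of \Cref{lem:convergenceOneOverKx}; the only genuinely new ingredient is the term $2\eta_k B_k D_k$, linear in $D_k$, which I would first dispose of by a weighted Young inequality, $2\eta_k B_k D_k \leq \tfrac{\mu\eta_k}{2}D_k^2 + \tfrac{2\eta_k}{\mu}B_k^2$. Plugging this into the hypothesis yields
\begin{align}
    D_{k+1}^2 \leq \left(1 - \tfrac{\mu\eta_k}{2}\right)D_k^2 + 2\eta_k^2\left(B_k^2 + 2\sigma^2\right) + \tfrac{2\eta_k}{\mu}B_k^2 ,
\end{align}
that is, at the price of halving the contraction factor and adding one extra $\bigoh(\eta_k B_k^2)$ error term we obtain a recursion of exactly the shape handled in \Cref{lem:convergenceOneOverKx}.

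Next I would substitute the explicit step-size. Since $\eta_k = \frac{2\mu}{\mu^2 k + 8L^2} = \frac{2}{\mu(k+8\kappa^2)}$, one has $1 - \tfrac{\mu\eta_k}{2} = \frac{k-1+8\kappa^2}{k+8\kappa^2}$, $(k+8\kappa^2)\eta_k = \frac{2}{\mu}$ and $(k+8\kappa^2)\eta_k^2 = \frac{4}{\mu^2(k+8\kappa^2)}$. Multiplying the recursion through by $(k+8\kappa^2)$ and setting $v_k \defeq (k-1+8\kappa^2)D_k^2$, so that $v_{k+1} = (k+8\kappa^2)D_{k+1}^2$, turns it into the telescoping inequality
\begin{align}
    v_{k+1} \leq v_k + \frac{4B_k^2}{\mu^2} + \frac{8B_k^2}{\mu^2(k+8\kappa^2)} + \frac{16\sigma^2}{\mu^2(k+8\kappa^2)} .
\end{align}
Summing over $i = 0, \dots, k$ and using $v_0 = (8\kappa^2-1)D_0^2 \leq 8\kappa^2 D_0^2$ gives $v_{k+1} \leq 8\kappa^2 D_0^2 + \tfrac{1}{\mu^2}\sum_{i=0}^k\bigl(4B_i^2 + \tfrac{8B_i^2 + 16\sigma^2}{i+8\kappa^2}\bigr)$.

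Finally I would estimate the sum with the hypothesis $B_i^2 \leq \frac{A + B\log(i+8\kappa^2)}{i+8\kappa^2}$. Using $\log(i+8\kappa^2) \leq \log(k+8\kappa^2)$ together with the two elementary facts $\sum_{i=0}^k\frac{1}{i+8\kappa^2} \leq \log(k+8\kappa^2)$ and $\sum_{i=0}^\infty\frac{1}{(i+8\kappa^2)^2} \leq \tfrac15$ (both valid because $\kappa = L/\mu \geq 1$, hence $8\kappa^2 \geq 8$), the dominant term $\sum_i 4B_i^2$ is of order $(A+B)\log(k+8\kappa^2)^2$ and the two remaining sums — which carry an extra $(i+8\kappa^2)^{-1}$ — are of lower order; since $\log(k+8\kappa^2) \geq \log 8 > 2$, all additive constants and lower powers of the logarithm are absorbed into $\log(k+8\kappa^2)^2$, and a careful collection of the numerical factors brings the bracket below $\bigl(5(A+B) + 8\sigma^2\bigr)\log(k+8\kappa^2)^2$. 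Dividing by $k+8\kappa^2$ and using $D_{k+1}^2 = v_{k+1}/(k+8\kappa^2)$ gives the claim. The main obstacle is only the bookkeeping of those numerical constants so as to land precisely on $5(A+B)+8\sigma^2$; conceptually the proof is the telescoping-plus-integral-comparison computation of \Cref{lem:convergenceOneOverKx}, the extra logarithmic factor coming from summing $\sum_i B_i^2 \sim \sum_i \log(i)/i$.
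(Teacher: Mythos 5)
Your proposal is correct and follows essentially the same route as the paper's proof: absorb the cross term $2\eta_k B_k D_k$ by Young's inequality at the cost of halving the contraction to $1-\tfrac{\mu\eta_k}{2}$, multiply through by $k+8\kappa^2$ so the recursion telescopes, and control the accumulated error sums by series--integral comparison, using $\log(k+8\kappa^2)\ge\log 8$ to absorb constants into $\log(k+8\kappa^2)^2$. The only difference is bookkeeping: you keep the $\sigma^2$-sum and the $B_i^2$-sums separate (bounding $\sum_i (i+8\kappa^2)^{-1}\le\log(k+8\kappa^2)$ and $\sum_i (i+8\kappa^2)^{-2}\le\tfrac15$) rather than bundling every per-step error into a single $\log(i+8\kappa^2)/(i+8\kappa^2)$ term as the paper does, and this slightly sharper accounting in fact lands on the $8\sigma^2$ appearing in the statement, whereas the paper's own final display carries $16\sigma^2$.
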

\begin{proof}
    We first rework the recursion, we use the fact that 
    \begin{align}
        2\curr[\step]\curr[\Ebound] \curr[D] = 2\curr[\step]\left( \frac{\sqrt{2}\curr[\Ebound]}{\sqrt{\strong}}\right)\left(\frac{\sqrt{\strong}}{\sqrt{2}}\curr[D]\right) \leq  \curr[\step] \left(\frac{2\curr[\Ebound]^2}{\strong} + \frac{\strong}{2}\curr[D]^2\right) = \frac{2 \curr[\step] \curr[\Ebound]^2}{\strong} + \curr[\step] \frac{\strong}{2} \curr[D]^2 \,.
    \end{align}
    The new recursion becomes
    \begin{align}
	\next[D]^2 \leq \left(1 - \frac{\strong \curr[\step]}{2} \right )\curr[D]^2  + 2\curr[\step]^2(\curr[\Ebound]^2 + 2 \noisevar) + \frac{2 \curr[\step] \curr[\Ebound]^2}{\strong} \, .   
        \label{eq:newRecrusion}
    \end{align}
    From this recursion, we obtain by expanding all terms
    \begin{align}
        \next[D]^2 \leq\;& \left(1 -   \frac{\strong^2}{\strong^2 \run + 8L^2} \right )\curr[D]^2  +  \frac{8\strong^2}{(\strong^2 \run + 8L^2)^2}  \left( 2 \noisevar + \frac{A + B \log\left(k+4 \kappa^2 \right)}{k + 4 \kappa^2} \right)\\
        & + \frac{2\strong}{(\strong^2 \run + 8L^2)}\frac{2(A + B \log\left(k+8 \kappa^2 \right))}{\strong(k + 8 \kappa^2)} \\
        (\strong^2 \run + 8L^2)\next[D]^2 \leq \; & \left(\strong^2 \run + 8L^2 -   \strong^2 \right )\curr[D]^2  +  \frac{8}{( \run + 8\kappa^2)}  \left( 2 \noisevar + \frac{(A + B \log\left(k+8 \kappa^2 \right))}{(k + 8 \kappa^2)} \right)\\
        & + \frac{4(A + B \log\left(k+8 \kappa^2 \right))}{(k + 8 \kappa^2)}  \\
        \leq\;& \left(\strong^2 (\run - 1) + 8L^2 \right )\curr[D]^2  +  \frac{\log\left(k+8 \kappa^2\right)}{k + 8 \kappa^2}\left(5(B+A) + 16 \noisevar\right)
    \end{align}
    where we use the fact that $k \geq 0$ and $\kappa \geq 1$ so that $\log\left(k+8 \kappa^2\right) \geq \log\left(8\right) > 1$. We deduce that 
    \begin{align}
       (\strong^2 \run + 8L^2)\next[D]^2&\leq \left(8L^2  - \strong^2\right )D_0^2 +  \left(5(B+A) + 16 \noisevar\right)\sum_{i=0}^k \frac{\log(i + 8\kappa^2)}{(i + 8\kappa^2)} \\
       &\leq 8L^2D_0^2 +\left(5(B+A) + 16 \noisevar\right)\log(k + 8\kappa^2)^2 
    \end{align}
    where the last inequality is by integral series comparison, using the fact that $t \mapsto \log(t) / t$ is decreasing for $t \geq \exp(1)$, we have
    \begin{align}
        \sum_{i=0}^k \frac{\log(i + 8\kappa^2)}{(i + 8\kappa^2)}\leq \frac{\log(8\kappa^2)}{8\kappa^2} + \log(k + 8\kappa^2)^2  - \log(8\kappa^2)^2  \leq \log(k + 8\kappa^2)^2.
    \end{align}
\end{proof}

\begin{lemma}
    Let $\curr[\step]= \eta < \frac{1}{2\strong}$  for all $k \in \N$, $\kappa = \frac{L}{\strong}$, and $\seqinf{D}{\run}$ be a non-negative sequence satisfying for all $\run$
     \begin{align}
	\next[D]^2 \leq \left(1 - \strong \curr[\step] \right )\curr[D]^2  + 2\curr[\step]^2 \curr[\Ebound]^2 + 2\curr[\step]\curr[\Ebound]  \curr[D] \, .   
    \end{align}
    where, there is a constant $A > 0$, with $\rho = 1 - \frac{\strong\eta}{2}$ such that, for all $k \in \N$,
    \begin{align}
        B_k^2 \leq A \rho^k \, .
    \end{align}
    Then, we have 
    \begin{align}
        \curr[D]^2  \leq \rho^k \left( D_0^2 + \frac{kA}{\rho}  \left(2\eta^2 + 2 \frac{\eta}{\strong}\right)\right) \, .
    \end{align}
    \label{lem:convergenceLinearDerx}
\end{lemma}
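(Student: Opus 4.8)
The plan is to reuse the opening step of the proof of \Cref{lem:convergenceOneOverKDerx}: tame the cross term $2\eta\,\curr[\Ebound]\,\curr[D]$ with Young's inequality, choosing the weights so that the residual coefficient of $\curr[D]^2$ comes out exactly as $\rho$. Writing $2\eta\,\curr[\Ebound]\,\curr[D] = 2\eta\bigl(\tfrac{\sqrt2\,\curr[\Ebound]}{\sqrt\strong}\bigr)\bigl(\tfrac{\sqrt\strong}{\sqrt2}\,\curr[D]\bigr) \le \eta\bigl(\tfrac{2\curr[\Ebound]^2}{\strong} + \tfrac{\strong}{2}\curr[D]^2\bigr)$, the hypothesized recursion becomes
\[
    \next[D]^2 \;\le\; \Bigl(1 - \tfrac{\strong\eta}{2}\Bigr)\curr[D]^2 + \Bigl(2\eta^2 + \tfrac{2\eta}{\strong}\Bigr)\curr[\Ebound]^2 \;=\; \rho\,\curr[D]^2 + c\,\curr[\Ebound]^2,
\]
where $c \defeq 2\eta^2 + 2\eta/\strong$ and $\rho = 1-\strong\eta/2 \in (0,1)$ since $0 < \eta < 1/(2\strong)$. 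Feeding in the hypothesis $\curr[\Ebound]^2 \le A\rho^k$ turns this into the purely geometric recursion $\next[D]^2 \le \rho\,\curr[D]^2 + cA\rho^k$.

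The second step is to solve this one-line recursion, which I would do by induction on $k$ with the ansatz $\curr[D]^2 \le \rho^k\bigl(D_0^2 + kcA/\rho\bigr)$. The base case $k=0$ is immediate; for the inductive step, plugging the hypothesis into $\next[D]^2 \le \rho\,\curr[D]^2 + cA\rho^k$ gives $\next[D]^2 \le \rho^{k+1}D_0^2 + kcA\rho^k + cA\rho^k = \rho^{k+1}\bigl(D_0^2 + (k+1)cA/\rho\bigr)$, closing the induction. (Equivalently one unrolls the recursion directly and uses $\sum_{i=0}^{k-1}\rho^{\,k-1-i}\rho^{\,i} = k\rho^{k-1} = (k/\rho)\rho^k$.) Rewriting $cA/\rho = \tfrac{A}{\rho}\bigl(2\eta^2 + 2\eta/\strong\bigr)$ then recovers verbatim the claimed bound $\curr[D]^2 \le \rho^k\bigl(D_0^2 + \tfrac{kA}{\rho}(2\eta^2 + 2\eta/\strong)\bigr)$.

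I do not expect any real obstacle: this is the interpolation case ($\noisevar = 0$) counterpart of \Cref{lem:convergenceOneOverKDerx}, with a geometrically decaying error budget $A\rho^k$ replacing the $\log(k)/k$ one, and the final recursion is elementary. The only points needing a touch of care are (i) picking the Young weights so the contraction factor is exactly $\rho = 1-\strong\eta/2$ — it is precisely this "resonance" between the contraction and the error rate $\rho^k$ that produces the extra factor of $k$ — and (ii) keeping track of $k\rho^{k-1}$ versus $\rho^k$, so the final constant is $\tfrac{kA}{\rho}(\cdots)$ rather than $kA(\cdots)$.
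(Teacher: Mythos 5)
Your proof is correct and follows essentially the same route as the paper: the same Young-inequality step (the one used in \eqref{eq:newRecrusion}) yielding $\next[D]^2 \le \rho \curr[D]^2 + (2\eta^2 + 2\eta/\strong)A\rho^k$, followed by solving this geometric recursion — your induction with the ansatz $\curr[D]^2 \le \rho^k(D_0^2 + kcA/\rho)$ is just the paper's telescoping of $\curr[D]^2/\rho^k$ written differently. No gaps.
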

\begin{proof}
    We proceed similarly as in \eqref{eq:newRecrusion} and obtain
    \begin{align}
	\next[D]^2 &\leq \left(1 - \frac{\strong \curr[\step]}{2} \right )\curr[D]^2  + 2\curr[\step]^2\curr[\Ebound]^2 + \frac{2 \curr[\step] \curr[\Ebound]^2}{\strong} \leq\rho \curr[D]^2 + A \rho^k \left(2\eta^2 + 2 \frac{\eta}{\strong}\right) \, .
    \end{align}
    We rewrite and use an induction to obtain
    \begin{align}
	\frac{\next[D]^2}{\rho^{k+1}} &\leq \frac{\curr[D]^2}{\rho^k} + \frac{A}{\rho}  \left(2\eta^2 + 2 \frac{\eta}{\strong}\right) \leq D_0^2 + \frac{kA}{\rho}  \left(2\eta^2 + 2 \frac{\eta}{\strong}\right)
    \end{align}
    which is the desired result.
\end{proof}



\section*{NeurIPS Paper Checklist}

\begin{enumerate}

\item {\bf Claims}
    \item[] Question: Do the main claims made in the abstract and introduction accurately reflect the paper's contributions and scope?
    \item[] Answer: \answerYes{} 
    \item[] Justification: We propose a theoretical analysis to iterative differentiation of stochastic gradient descent as highlighted in abstract and introduction.
    \item[] Guidelines:
    \begin{itemize}
        \item The answer NA means that the abstract and introduction do not include the claims made in the paper.
        \item The abstract and/or introduction should clearly state the claims made, including the contributions made in the paper and important assumptions and limitations. A No or NA answer to this question will not be perceived well by the reviewers. 
        \item The claims made should match theoretical and experimental results, and reflect how much the results can be expected to generalize to other settings. 
        \item It is fine to include aspirational goals as motivation as long as it is clear that these goals are not attained by the paper. 
    \end{itemize}

\item {\bf Limitations}
    \item[] Question: Does the paper discuss the limitations of the work performed by the authors?
    \item[] Answer: \answerYes{} 
    \item[] Justification: While we do not have a Limitations section in the paper, we highlight potential future works in the conclusion related to the limitations of our work.
    \item[] Guidelines:
    \begin{itemize}
        \item The answer NA means that the paper has no limitation while the answer No means that the paper has limitations, but those are not discussed in the paper. 
        \item The authors are encouraged to create a separate "Limitations" section in their paper.
        \item The paper should point out any strong assumptions and how robust the results are to violations of these assumptions (e.g., independence assumptions, noiseless settings, model well-specification, asymptotic approximations only holding locally). The authors should reflect on how these assumptions might be violated in practice and what the implications would be.
        \item The authors should reflect on the scope of the claims made, e.g., if the approach was only tested on a few datasets or with a few runs. In general, empirical results often depend on implicit assumptions, which should be articulated.
        \item The authors should reflect on the factors that influence the performance of the approach. For example, a facial recognition algorithm may perform poorly when image resolution is low or images are taken in low lighting. Or a speech-to-text system might not be used reliably to provide closed captions for online lectures because it fails to handle technical jargon.
        \item The authors should discuss the computational efficiency of the proposed algorithms and how they scale with dataset size.
        \item If applicable, the authors should discuss possible limitations of their approach to address problems of privacy and fairness.
        \item While the authors might fear that complete honesty about limitations might be used by reviewers as grounds for rejection, a worse outcome might be that reviewers discover limitations that aren't acknowledged in the paper. The authors should use their best judgment and recognize that individual actions in favor of transparency play an important role in developing norms that preserve the integrity of the community. Reviewers will be specifically instructed to not penalize honesty concerning limitations.
    \end{itemize}

\item {\bf Theory Assumptions and Proofs}
    \item[] Question: For each theoretical result, does the paper provide the full set of assumptions and a complete (and correct) proof?
    \item[] Answer: \answerYes{} 
    \item[] Justification: We provide a complete proof for each theoretical result in the paper, along with the full set of assumptions. The sketch of the proof is provided in the main paper, while the complete proof is provided in the appendix.
    \item[] Guidelines:
    \begin{itemize}
        \item The answer NA means that the paper does not include theoretical results. 
        \item All the theorems, formulas, and proofs in the paper should be numbered and cross-referenced.
        \item All assumptions should be clearly stated or referenced in the statement of any theorems.
        \item The proofs can either appear in the main paper or the supplemental material, but if they appear in the supplemental material, the authors are encouraged to provide a short proof sketch to provide intuition. 
        \item Inversely, any informal proof provided in the core of the paper should be complemented by formal proofs provided in appendix or supplemental material.
        \item Theorems and Lemmas that the proof relies upon should be properly referenced. 
    \end{itemize}

    \item {\bf Experimental Result Reproducibility}
    \item[] Question: Does the paper fully disclose all the information needed to reproduce the main experimental results of the paper to the extent that it affects the main claims and/or conclusions of the paper (regardless of whether the code and data are provided or not)?
    \item[] Answer: \answerYes{} 
    \item[] Justification: Our numerical experiments are simple and reproducible, and we provide all the necessary details in the paper to reproduce the main experimental results.
    \item[] Guidelines:
    \begin{itemize}
        \item The answer NA means that the paper does not include experiments.
        \item If the paper includes experiments, a No answer to this question will not be perceived well by the reviewers: Making the paper reproducible is important, regardless of whether the code and data are provided or not.
        \item If the contribution is a dataset and/or model, the authors should describe the steps taken to make their results reproducible or verifiable. 
        \item Depending on the contribution, reproducibility can be accomplished in various ways. For example, if the contribution is a novel architecture, describing the architecture fully might suffice, or if the contribution is a specific model and empirical evaluation, it may be necessary to either make it possible for others to replicate the model with the same dataset, or provide access to the model. In general. releasing code and data is often one good way to accomplish this, but reproducibility can also be provided via detailed instructions for how to replicate the results, access to a hosted model (e.g., in the case of a large language model), releasing of a model checkpoint, or other means that are appropriate to the research performed.
        \item While NeurIPS does not require releasing code, the conference does require all submissions to provide some reasonable avenue for reproducibility, which may depend on the nature of the contribution. For example
        \begin{enumerate}
            \item If the contribution is primarily a new algorithm, the paper should make it clear how to reproduce that algorithm.
            \item If the contribution is primarily a new model architecture, the paper should describe the architecture clearly and fully.
            \item If the contribution is a new model (e.g., a large language model), then there should either be a way to access this model for reproducing the results or a way to reproduce the model (e.g., with an open-source dataset or instructions for how to construct the dataset).
            \item We recognize that reproducibility may be tricky in some cases, in which case authors are welcome to describe the particular way they provide for reproducibility. In the case of closed-source models, it may be that access to the model is limited in some way (e.g., to registered users), but it should be possible for other researchers to have some path to reproducing or verifying the results.
        \end{enumerate}
    \end{itemize}

\item {\bf Open access to data and code}
    \item[] Question: Does the paper provide open access to the data and code, with sufficient instructions to faithfully reproduce the main experimental results, as described in supplemental material?
    \item[] Answer: \answerYes{} 
    \item[] Justification: The code is released along with the paper. It depends on jax, numpy and matplotlib.
    \item[] Guidelines:
    \begin{itemize}
        \item The answer NA means that paper does not include experiments requiring code.
        \item Please see the NeurIPS code and data submission guidelines (\url{https://nips.cc/public/guides/CodeSubmissionPolicy}) for more details.
        \item While we encourage the release of code and data, we understand that this might not be possible, so “No” is an acceptable answer. Papers cannot be rejected simply for not including code, unless this is central to the contribution (e.g., for a new open-source benchmark).
        \item The instructions should contain the exact command and environment needed to run to reproduce the results. See the NeurIPS code and data submission guidelines (\url{https://nips.cc/public/guides/CodeSubmissionPolicy}) for more details.
        \item The authors should provide instructions on data access and preparation, including how to access the raw data, preprocessed data, intermediate data, and generated data, etc.
        \item The authors should provide scripts to reproduce all experimental results for the new proposed method and baselines. If only a subset of experiments are reproducible, they should state which ones are omitted from the script and why.
        \item At submission time, to preserve anonymity, the authors should release anonymized versions (if applicable).
        \item Providing as much information as possible in supplemental material (appended to the paper) is recommended, but including URLs to data and code is permitted.
    \end{itemize}

\item {\bf Experimental Setting/Details}
    \item[] Question: Does the paper specify all the training and test details (e.g., data splits, hyperparameters, how they were chosen, type of optimizer, etc.) necessary to understand the results?
    \item[] Answer: \answerYes{} 
    \item[] Justification: We consider simple regression settings that we fully describe in the paper.
    \item[] Guidelines:
    \begin{itemize}
        \item The answer NA means that the paper does not include experiments.
        \item The experimental setting should be presented in the core of the paper to a level of detail that is necessary to appreciate the results and make sense of them.
        \item The full details can be provided either with the code, in appendix, or as supplemental material.
    \end{itemize}

\item {\bf Experiment Statistical Significance}
    \item[] Question: Does the paper report error bars suitably and correctly defined or other appropriate information about the statistical significance of the experiments?
    \item[] Answer: \answerNo{} 
    \item[] Justification: We do not report error bars in the paper. The goal of Figure~\ref{fig:lstsq} is to show the variance of one trajectory of the algorithm, and repeating the procedure will not highlight this aspect.
    \item[] Guidelines:
    \begin{itemize}
        \item The answer NA means that the paper does not include experiments.
        \item The authors should answer "Yes" if the results are accompanied by error bars, confidence intervals, or statistical significance tests, at least for the experiments that support the main claims of the paper.
        \item The factors of variability that the error bars are capturing should be clearly stated (for example, train/test split, initialization, random drawing of some parameter, or overall run with given experimental conditions).
        \item The method for calculating the error bars should be explained (closed form formula, call to a library function, bootstrap, etc.)
        \item The assumptions made should be given (e.g., Normally distributed errors).
        \item It should be clear whether the error bar is the standard deviation or the standard error of the mean.
        \item It is OK to report 1-sigma error bars, but one should state it. The authors should preferably report a 2-sigma error bar than state that they have a 96\% CI, if the hypothesis of Normality of errors is not verified.
        \item For asymmetric distributions, the authors should be careful not to show in tables or figures symmetric error bars that would yield results that are out of range (e.g. negative error rates).
        \item If error bars are reported in tables or plots, The authors should explain in the text how they were calculated and reference the corresponding figures or tables in the text.
    \end{itemize}

\item {\bf Experiments Compute Resources}
    \item[] Question: For each experiment, does the paper provide sufficient information on the computer resources (type of compute workers, memory, time of execution) needed to reproduce the experiments?
    \item[] Answer: \answerYes{} 
    \item[] Justification: This is simple low-dimensional regression problems that can be run on a standard laptop (Apple Macbook M3).
    \item[] Guidelines:
    \begin{itemize}
        \item The answer NA means that the paper does not include experiments.
        \item The paper should indicate the type of compute workers CPU or GPU, internal cluster, or cloud provider, including relevant memory and storage.
        \item The paper should provide the amount of compute required for each of the individual experimental runs as well as estimate the total compute. 
        \item The paper should disclose whether the full research project required more compute than the experiments reported in the paper (e.g., preliminary or failed experiments that didn't make it into the paper). 
    \end{itemize}
    
\item {\bf Code Of Ethics}
    \item[] Question: Does the research conducted in the paper conform, in every respect, with the NeurIPS Code of Ethics \url{https://neurips.cc/public/EthicsGuidelines}?
    \item[] Answer: \answerYes{} 
    \item[] Justification: There is no ethical concern in the paper.
    \item[] Guidelines:
    \begin{itemize}
        \item The answer NA means that the authors have not reviewed the NeurIPS Code of Ethics.
        \item If the authors answer No, they should explain the special circumstances that require a deviation from the Code of Ethics.
        \item The authors should make sure to preserve anonymity (e.g., if there is a special consideration due to laws or regulations in their jurisdiction).
    \end{itemize}

\item {\bf Broader Impacts}
    \item[] Question: Does the paper discuss both potential positive societal impacts and negative societal impacts of the work performed?
    \item[] Answer: \answerNo{} 
    \item[] Justification: There is no societal impact of the work performed due to its very theoretical nature.
    \item[] Guidelines:
    \begin{itemize}
        \item The answer NA means that there is no societal impact of the work performed.
        \item If the authors answer NA or No, they should explain why their work has no societal impact or why the paper does not address societal impact.
        \item Examples of negative societal impacts include potential malicious or unintended uses (e.g., disinformation, generating fake profiles, surveillance), fairness considerations (e.g., deployment of technologies that could make decisions that unfairly impact specific groups), privacy considerations, and security considerations.
        \item The conference expects that many papers will be foundational research and not tied to particular applications, let alone deployments. However, if there is a direct path to any negative applications, the authors should point it out. For example, it is legitimate to point out that an improvement in the quality of generative models could be used to generate deepfakes for disinformation. On the other hand, it is not needed to point out that a generic algorithm for optimizing neural networks could enable people to train models that generate Deepfakes faster.
        \item The authors should consider possible harms that could arise when the technology is being used as intended and functioning correctly, harms that could arise when the technology is being used as intended but gives incorrect results, and harms following from (intentional or unintentional) misuse of the technology.
        \item If there are negative societal impacts, the authors could also discuss possible mitigation strategies (e.g., gated release of models, providing defenses in addition to attacks, mechanisms for monitoring misuse, mechanisms to monitor how a system learns from feedback over time, improving the efficiency and accessibility of ML).
    \end{itemize}
    
\item {\bf Safeguards}
    \item[] Question: Does the paper describe safeguards that have been put in place for responsible release of data or models that have a high risk for misuse (e.g., pretrained language models, image generators, or scraped datasets)?
    \item[] Answer: \answerNA{} 
    \item[] Justification: No model or data is released in the paper.
    \item[] Guidelines:
    \begin{itemize}
        \item The answer NA means that the paper poses no such risks.
        \item Released models that have a high risk for misuse or dual-use should be released with necessary safeguards to allow for controlled use of the model, for example by requiring that users adhere to usage guidelines or restrictions to access the model or implementing safety filters. 
        \item Datasets that have been scraped from the Internet could pose safety risks. The authors should describe how they avoided releasing unsafe images.
        \item We recognize that providing effective safeguards is challenging, and many papers do not require this, but we encourage authors to take this into account and make a best faith effort.
    \end{itemize}

\item {\bf Licenses for existing assets}
    \item[] Question: Are the creators or original owners of assets (e.g., code, data, models), used in the paper, properly credited and are the license and terms of use explicitly mentioned and properly respected?
    \item[] Answer: \answerYes{} 
    \item[] Justification: We cite accordingly the original papers that produced the code package used in our experiments.
    \item[] Guidelines:
    \begin{itemize}
        \item The answer NA means that the paper does not use existing assets.
        \item The authors should cite the original paper that produced the code package or dataset.
        \item The authors should state which version of the asset is used and, if possible, include a URL.
        \item The name of the license (e.g., CC-BY 4.0) should be included for each asset.
        \item For scraped data from a particular source (e.g., website), the copyright and terms of service of that source should be provided.
        \item If assets are released, the license, copyright information, and terms of use in the package should be provided. For popular datasets, \url{paperswithcode.com/datasets} has curated licenses for some datasets. Their licensing guide can help determine the license of a dataset.
        \item For existing datasets that are re-packaged, both the original license and the license of the derived asset (if it has changed) should be provided.
        \item If this information is not available online, the authors are encouraged to reach out to the asset's creators.
    \end{itemize}

\item {\bf New Assets}
    \item[] Question: Are new assets introduced in the paper well documented and is the documentation provided alongside the assets?
    \item[] Answer: \answerNA{} 
    \item[] Justification: The paper does not introduce new assets.
    \item[] Guidelines:
    \begin{itemize}
        \item The answer NA means that the paper does not release new assets.
        \item Researchers should communicate the details of the dataset/code/model as part of their submissions via structured templates. This includes details about training, license, limitations, etc. 
        \item The paper should discuss whether and how consent was obtained from people whose asset is used.
        \item At submission time, remember to anonymize your assets (if applicable). You can either create an anonymized URL or include an anonymized zip file.
    \end{itemize}

\item {\bf Crowdsourcing and Research with Human Subjects}
    \item[] Question: For crowdsourcing experiments and research with human subjects, does the paper include the full text of instructions given to participants and screenshots, if applicable, as well as details about compensation (if any)? 
    \item[] Answer: \answerNA{} 
    \item[] Justification: The paper does not involve crowdsourcing nor research with human subjects.
    \item[] Guidelines:
    \begin{itemize}
        \item The answer NA means that the paper does not involve crowdsourcing nor research with human subjects.
        \item Including this information in the supplemental material is fine, but if the main contribution of the paper involves human subjects, then as much detail as possible should be included in the main paper. 
        \item According to the NeurIPS Code of Ethics, workers involved in data collection, curation, or other labor should be paid at least the minimum wage in the country of the data collector. 
    \end{itemize}

\item {\bf Institutional Review Board (IRB) Approvals or Equivalent for Research with Human Subjects}
    \item[] Question: Does the paper describe potential risks incurred by study participants, whether such risks were disclosed to the subjects, and whether Institutional Review Board (IRB) approvals (or an equivalent approval/review based on the requirements of your country or institution) were obtained?
    \item[] Answer: \answerNA{} 
    \item[] Justification: The paper does not involve crowdsourcing nor research with human subjects.
    \item[] Guidelines:
    \begin{itemize}
        \item The answer NA means that the paper does not involve crowdsourcing nor research with human subjects.
        \item Depending on the country in which research is conducted, IRB approval (or equivalent) may be required for any human subjects research. If you obtained IRB approval, you should clearly state this in the paper. 
        \item We recognize that the procedures for this may vary significantly between institutions and locations, and we expect authors to adhere to the NeurIPS Code of Ethics and the guidelines for their institution. 
        \item For initial submissions, do not include any information that would break anonymity (if applicable), such as the institution conducting the review.
    \end{itemize}

\end{enumerate}

\end{document}